\pgfplotsset{select coords between index/.style 2 args={
    x filter/.code={
        \ifnum\coordindex<#1\fi
        \ifnum\coordindex>#2\fi
    }
}}
\newtheorem{proposition}{Proposition}
\newtheorem{lemma}{Lemma}
\newtheorem{proof}{Proof}
\DeclareFontFamily{U}{mathx}{}
\DeclareFontShape{U}{mathx}{m}{n}{<-> mathx10}{}
\DeclareSymbolFont{mathx}{U}{mathx}{m}{n}
\DeclareMathAccent{\widecheck}{0}{mathx}{"71}
\newcommand{\Rbb}{\mathbb{R}}
\newcommand{\Pbb}{\mathbb{P}}
\newcommand{\Hcal}{\mathcal{H}}
\newcommand{\Ical}{\mathcal{I}}
\newcommand{\Ocal}{\mathcal{O}}
\newcommand{\Tcal}{\mathcal{T}}
\newcommand{\Tcalt}{\mathcal{T}_{\tind}}
\newcommand{\Ucal}{\mathcal{U}}
\newcommand{\Vcal}{\mathcal{V}}
\newcommand{\Vr}{\ensuremath{\mathbb{V}_r}}
\newcommand{\VrC}{\ensuremath{\overline{\mathbb{V}}_r}}
\newcommand{\Vd}[1]{\ensuremath{V_{#1}}}
\newcommand{\VNp}{\ensuremath{\Vd{\Nf}^{\np}}}
\newcommand{\Mcal}{\mathcal{M}} 
\newcommand{\Mn}{\Mcal_{\Nrh}}
\newcommand{\TM}[1]{T_{#1}\Mn} 
\newcommand{\fs}{w} 
\newcommand{\fsR}{\mathcal{W}} 
\newcommand{\rsR}{W} 
\newcommand{\redb}{A} 
\newcommand{\redC}{Z} 
\newcommand{\rup}{Q_r}
\newcommand{\tind}{\tau} 
\newcommand{\Nfh}{\ensuremath{N}} 
\newcommand{\Nf}{\ensuremath{2\Nfh}} 
\newcommand{\np}{\ensuremath{p}} 
\newcommand{\nr}{{\ensuremath{n_\tind}}} 
\newcommand{\nrp}[1]{{\ensuremath{n^{#1}_\tind}}} 
\newcommand{\nrnew}{{\ensuremath{n_{\tind+1}}}}
\newcommand{\Nr}{\ensuremath{2\nr}} 
\newcommand{\Nrh}{\ensuremath{n}} 
\newcommand{\Nrz}{{\ensuremath{n_0}}}
\newcommand{\nt}{\ensuremath{N_t}} 
\newcommand{\nd}{d} 
\newcommand{\nm}{\ensuremath{m}} 
\newcommand{\nmj}{\ensuremath{\nm}_{\tind}} 
\newcommand{\nmp}{\ensuremath{\nm}_{\upd}} 
\newcommand{\nmz}{\ensuremath{\nm_0}} 
\newcommand{\nnzJ}{s} 
\newcommand{\npUs}{\np_\deimb^*} 
\newcommand{\npAs}{\np_\redb^*} 
\newcommand{\nms}{{m_s}} 
\newcommand{\ndeimr}{r} 
\newcommand{\nrc}{r_\deimC} 
\newcommand{\pd}{\sigma} 
\newcommand{\nINL}{n_{\text{NL}}} 
\newcommand{\dt}{\Delta t} 
\newcommand{\prm}{\eta} 
\newcommand{\vprm}{\vec{\prm}}
\newcommand{\norm}[1]{\left\lVert#1\right\rVert}
\newcommand{\Adot}{\ensuremath{Y_{\redb}}} 
\newcommand{\retr}{\mathcal{R}} 
\newcommand{\Vretr}{V} 
\newcommand{\hvf}{X_\Hcal} 
\newcommand{\hrham}{\mathcal{H}^{\hr}} 
\newcommand{\hvfk}{X_{\Hcal_{\prm_k}}} 
\newcommand{\Gdec}{h} 
\newcommand{\JGdec}{D_\Gdec} 
\newcommand{\cdec}{v} 
\newcommand{\deimsn}{D} 
\newcommand{\deimP}{\mathbb{P}} 
\newcommand{\deimb}{U} 
\newcommand{\deimii}{P} 
\newcommand{\deimbn}{U^{\upd}} 
\newcommand{\deimiin}{P^{\upd}} 
\newcommand{\deimC}{C} 
\newcommand{\deimf}{\Delta} 
\newcommand{\deimR}{R} 
\newcommand{\deimsns}{\deimsn^*}
\newcommand{\deimsnc}{\widecheck{\deimsn}}
\newcommand{\deimCs}{C^*}
\newcommand{\deimCc}{\widecheck{C}}
\newcommand{\deimRs}{R^*}
\newcommand{\deimRc}{\widecheck{R}}
\newcommand{\deimsi}{S} 
\newcommand{\deimsic}{\widecheck{S}} 
\newcommand{\deimRss}{\deimsi^{\top}\deimRs}
\newcommand{\redCs}{\redC^*}
\newcommand{\hki}{\widehat{k}_1} 
\newcommand{\hkii}{\widehat{k}_2} 
\newcommand{\kii}{k_2} 
\newcommand{\alphaup}{\alpha} 
\newcommand{\betaup}{\beta} 
\newcommand{\indtheta}{\theta} 
\newcommand{\indres}{r} 
\newcommand{\inde}{e} 
\newcommand{\radapt}{C_1} 
\newcommand{\cadapt}{C_2} 
\newcommand{\ladapt}{\lambda} 
\newcommand{\dsc}{\delta} 
\newcommand{\prmaic}{\alpha} 
\newcommand{\prmbic}{\beta} 
\newcommand{\prme}{\gamma} 
\newcommand{\prms}{\Gamma} 
\newcommand{\hc}{x} 
\newcommand{\vc}{y} 
\newcommand{\fsy}{\fs} 
\newcommand{\fsq}{q_h}
\newcommand{\fsp}{p_h}
\newcommand{\dfsq}{\dot{q}_h}
\newcommand{\dfsp}{\dot{p}_h}
\newcommand{\Dbb}{\mathbb{D}} 
\newcommand{\Ecal}{\mathcal{E}} 
\DeclareMathOperator{\new}{new}
\DeclareMathOperator{\upd}{upd}
\DeclareMathOperator{\rank}{rank}
\DeclareMathOperator{\hr}{{hROM}}
\DeclareMathOperator{\argmin}{argmin}
\pgfplotsset{compat=1.18}
\newcommand{\email}[1]{\protect\href{mailto:#1}{#1}}
\title{Fully adaptive structure-preserving hyper-reduction of parametric Hamiltonian systems\thanks{Accepted in the SIAM Journal on Scientific Computing on September 2024. Submitted to the editors on September 2023.
C. Pagliantini acknowledges the MIUR Excellence Department Project awarded to the Department of Mathematics, University of Pisa, CUP I57G22000700001, and the INDAM$/$GNCS 2024 project CUP E53C23001670001.
}}
\author{Cecilia Pagliantini\thanks{Dipartimento di Matematica,
			  Universit\`a di Pisa,
			  Italy.
  (\email{cecilia.pagliantini@unipi.it}).}
\and Federico Vismara\thanks{Centre for Analysis, Scientific computing and Applications, Eindhoven University of Technology, The Netherlands. (\email{f.vismara@tue.nl}).}}
\begin{document}

\maketitle

\begin{abstract}
Model order reduction provides low-complexity high-fidelity surrogate models that allow rapid and accurate solutions of parametric differential equations. The development of reduced order models for parametric \emph{nonlinear} Hamiltonian systems is challenged by several factors: (i) the geometric structure encoding the physical properties of the dynamics; (ii) the slowly decaying Kolmogorov $n$-width of conservative dynamics; (iii) the gradient structure of the nonlinear flow velocity; (iv) high variations in the numerical rank of the state as a function of time and parameters. We propose to address these aspects via a structure-preserving adaptive approach that combines symplectic dynamical low-rank approximation with adaptive gradient-preserving hyper-reduction and parameters sampling. Additionally, we propose to vary in time the dimensions of both the reduced basis space and the hyper-reduction space by monitoring the quality of the reduced solution via an error indicator related to the projection error of the Hamiltonian vector field. The resulting adaptive hyper-reduced models preserve the geometric structure of the Hamiltonian flow, do not rely on prior information on the dynamics, and can be solved at a cost that is linear in the dimension of the full order model and linear in the number of test parameters. Numerical experiments demonstrate the improved performances of the fully adaptive models compared to the original and reduced models. 
\end{abstract}

\section{Introduction}

We are interested in the numerical solution of large-scale parametric nonlinear Hamiltonian systems that need to be tested for a large number of parameters.
Despite the success of model order reduction for time-dependent problems \cite{BGW15,HPRo22}, reduced order models for such systems face little or no computational gain, and the possible onset of numerical instabilities resulting from failing to satisfy the Hamiltonian structure of the flow.

More in details, for any given parameter $\prm\in\Rbb^{\pd}$, $\pd\geq 1$, we consider the Hamiltonian system described by the initial value problem:
find $\fs(\cdot;\prm):\Tcal:=(t^0,\infty) \rightarrow\Vd{\Nf}\subset\Rbb^{\Nf}$ such that
\begin{equation}\label{eq:HamSystem}
    \left\{
    \begin{aligned}
        & \Dot{\fs}(t;\prm)=J_{\Nf}\nabla\Hcal_{\prm}(\fs(t;\prm)), &\quad t\in\Tcal, \\
        & \fs(t^0;\prm) = \fs^0(\prm)\in\Vd{\Nf}
    \end{aligned}\right.
\end{equation}
where $\Hcal_{\prm}:\Vd{\Nf}\rightarrow \Rbb$ is the Hamiltonian of the system for any $\prm$, $\Vd{\Nf}$ is a $\Nf$-dimensional vector space,
and $J_{\Nf}$ is the so-called canonical symplectic tensor 
\begin{equation*}
J_{\Nf} :=
	\begin{pmatrix}
		 0_{\Nfh} & I_{\Nfh} \\
		-I_{\Nfh} & 0_{\Nfh} \\
	\end{pmatrix}\in\Rbb^{\Nf\times\Nf},
\end{equation*}
with $I_{\Nfh}, 0_{\Nfh} \in\Rbb^{\Nfh\times\Nfh}$ denoting the identity and zero matrices, respectively. The operator $J_{\Nf}$ identifies a symplectic structure on the phase space of the Hamiltonian system \eqref{eq:HamSystem}.
Equivalently, the vector space $\Vd{\Nf}$ can be endowed with a local basis~$\{e_i\}_{i=1}^{\Nf}$ which is symplectic and orthonormal \cite[Chapter 12]{CdS01}, that is	$e_i^\top J_{\Nf} e_j=(J_{\Nf})_{i,j}$
and $(e_i,e_j)=\delta_{i,j}$ for all $i,j=1,\ldots,\Nf$,
where $(\cdot,\cdot)$ is the Euclidean inner product.
In this coordinates system the symplectic tensor is related to the so-called symplectic 2-form $\omega$ by $\omega(u,v)=u^{\top}J_{\Nf}v$ for any $u,v\in\Vd{\Nf}$.
Throughout the paper, we also assume that, for each fixed $\prm$, the velocity field of the flow is Lipschitz continuous in the state $\fs(\cdot;\prm)$ uniformly with respect to time. 

In large-scale simulations, namely when the number $\Nf$ of degrees of freedom is large, the computational cost of solving problem \eqref{eq:HamSystem} for $\np$ instances of the parameter $\prm$ can be prohibitive. In recent years, structure-preserving reduced basis methods \cite{PM16,AH17,HP20,MN17,P19} have allowed to derive computational models that retain the symplectic structure of the phase space and can be solved efficiently in case of \emph{linear} and \emph{affine} Hamiltonian systems. However, in the presence of \emph{nonlinear} Hamiltonian systems no computational gain is obtained. In a recent work \cite{PV22} by the authors, a gradient-preserving hyper-reduction algorithm has been developed to efficiently solve nonlinear problems resulting from the symplectic model order reduction of Hamiltonian systems.
The idea is to combine a suitable decomposition of the reduced Hamiltonian with empirical interpolation (EIM) \cite{BMNP04} to derive an approximate Hamiltonian that can be evaluated at a cost independent of the size of the full order model. The method proposed in \cite{PV22} can deal, however, with only one parameter at the time. Moreover, it has been observed that
the reducibility of the nonlinear operators involved increases as the size of the reduced basis space decreases. Low-dimensional approximation spaces for conservative dynamical systems are achievable only via nonlinear model order reduction. Indeed the slowly decaying Kolmogorov $n$-width typical of transport-dominated problems and conservative dynamics \cite{deVore17} makes the use of traditional reduced basis methods based on global approximation spaces ineffective.
To address this challenge, symplectic dynamical techniques based on dynamical low-rank approximation have been developed in \cite{MN17,P19} to evolve the reduced space, but no hyper-reduction has been considered. Additionally, even in the presence of linear Hamiltonian fields, the aforementioned methods achieve a computational complexity of, at least, $O(\Nfh\np)$: for many instances of the parameters, large-scale simulations are thus simply not feasible.

This work proposes to combine symplectic dynamical model order reduction with adaptive gradient-preserving hyper-reduction and parameters sampling to yield efficient reduced order models.
The main contributions are:
\begin{enumerate}
    \item a gradient-preserving hyper-reduction based on empirical interpolation where the EIM space is updated in time via a low-rank correction that minimizes the residual. The adaptive strategy relies on a subsample of parameters chosen as to improve the approximation of the nonlinear term at the current time;
    \item a strategy to select parameter subsamples to evolve the reduced basis space and for the adaptive components of the algorithm to improve computational efficiency while ensuring favorable approximability properties;
\item an algorithm to adapt both the dimension of the reduced basis space and of the EIM hyper-reduction space to achieve the desired accuracy in situations where the (numerical) rank of the solution has high variations over time;
\item a novel error indicator to drive the adaptivity based on the error between the Hamiltonian vector field and its local projection onto the tangent space of the reduced basis space, and an algorithm to efficiently evaluate it.
\end{enumerate}
 The resulting hyper-reduced models preserve the geometric structure of the Hamiltonian flow, do not rely on prior information on the dynamics, and can be solved at a cost that is linear in the dimension $\Nfh$ of the full order model, linear in the number $\np$ of parameters and that does not depend on the product $\Nfh\np$.

The remainder of the paper is organized as follows.
In \Cref{sec:SDLR} we introduce a matrix-valued formulation of the Hamiltonian dynamical system \eqref{eq:HamSystem} and discuss its symplectic dynamical low-rank approximation.
\Cref{sec:adap_GPEIM} concerns a novel hyper-reduction technique based on empirical interpolation where the EIM basis is adapted over time via a low-rank correction and the adaptation is based on a subsample of the test parameters,
whose selection
is discussed in \Cref{sec:samples}.
Next, in \Cref{sec:rank-adaptivity} we present an algorithm to adapt the rank of the approximate reduced state and the dimension of the EIM space.
An error indicator and another subsample of the test parameters are used to drive the adaptivity, as discussed in
\Cref{sec:pAstar_selection}.
The temporal discretization of the resulting hyper-reduced dynamical system is briefly described in \Cref{sec:temp_discr_hypreduced}. After summarizing in \Cref{sec:summary} the resulting algorithm and its computational complexity, the proposed methods are tested on a set of numerical experiments in \Cref{sec:numexp}. Some concluding remarks are presented in \Cref{sec:conclusions}.

\section{Symplectic dynamical low-rank approximation}
\label{sec:SDLR}

We are interested in solving the Hamiltonian system \eqref{eq:HamSystem}
for a given set of $\np$ parameters. The problem can then be recast in a matrix-valued formulation as follows.
Let $\fsR_k:\Tcal\subset\Rbb\rightarrow\Vd{\Nf}\subset\Rbb^{\Nf}$, $1\leq k\leq\np$, denote the state variable associated with the $k$th parameter $\prm_k$, i.e. $\fsR_k(t)=\fs(t;\prm_k)$, and let $\VNp$ be the $\np$-ary Cartesian power of $\Vd{\Nf}$.
For any $t\in\Tcal$, we consider the matrix-valued quantity 
$\fsR(t)=[\fsR_1(t)|\dots|\fsR_{\np}(t)]\in\VNp\subset\Rbb^{\Nf\times\np}$ and the dynamical system:
given $\fsR^0\in \VNp$
find $\fsR\in C^1(\Tcal;\VNp)$ such that
\begin{equation}\label{eq:FOM}
\left\{\begin{aligned}
    & \dot{\fsR}(t)=\hvf(\fsR(t)):=J_{\Nf}\nabla\Hcal(\fsR(t)),\qquad t\in\Tcal, \\
    & \fsR(t^0)=\fsR^0,
\end{aligned}\right.
\end{equation}
where the Hamiltonian $\Hcal$ is the vector-valued quantity whose $k$th entry is defined as $(\Hcal(\fsR(t)))_k:=\Hcal_{\prm_k}(\fsR_k(t))$, for any $k=1,\ldots,\np$ and $t\in\Tcal$, and its gradient is the matrix $\nabla\Hcal(\fsR(t))=[\nabla_{\fsR_1}\Hcal_{\prm_1}(\fsR_1(t))|\dots|\nabla_{\fsR_{\np}}\Hcal_{\prm_{\np}}(\fsR_{\np}(t))]\in\Rbb^{\Nf\times\np}$.

For the model order reduction of \eqref{eq:FOM} we consider a nonlinear approach based on dynamical low-rank approximation \cite{KL07}. Since we need to preserve the symplectic structure of the phase space, we consider the symplectic approach proposed in \cite{P19,MN17} where, in addition, we allow the rank of the approximate state to vary in time.
Let $\Nrh:\Tcal\rightarrow\mathbb{N}_{>0}$ with
$\Nrh(t)\ll\Nfh$ for any $t\in\Tcal$. We consider a low-rank approximation $\rsR:\Tcal\rightarrow\VNp$ of the full order state $\fsR$ of the form
\begin{equation}\label{eq:LR}
    \rsR(t)=\redb(t)\redC(t),\qquad \forall t\in\Tcal,
\end{equation}
where $\redC:\Tcal\rightarrow \Rbb^{2\Nrh(t)\times\np}$ and 
$\redb: \Tcal\rightarrow\Rbb^{\Nf\times 2\Nrh(t)}$.
The dynamics of the approximate state $\rsR$ is prescribed in terms of evolution equations for $\redb$, the time-dependent reduced basis, and for $\redC$, the expansion coefficients in the reduced basis.
By imposing that the reduced space spanned by the columns of $\redb$ is a symplectic vector space at every time, the approximate reduced dynamics preserves the geometric structure of the full order model. This amounts to approximating $\fsR$ in the manifold
\begin{align*}
\Mn=\{\rsR\in\Rbb^{\Nf\times \np}:\;
    & \rsR = \redb\redC\;\mbox{with}\;
		\redb\in\Ucal_{\Nrh},\,
        \mbox{and}\\
        & \redC\in\Rbb^{2\Nrh(t)\times\np},\,\rank(\redC\redC^\top + J_{2\Nrh(t)}^\top \redC\redC^\top J_{2\Nrh(t)}) = 2\Nrh(t)\},
\end{align*}
where $\Ucal_{\Nrh}:=\{\redb\in\Rbb^{\Nf\times 2\Nrh(t)}: \redb^\top \redb=I_{2\Nrh(t)},\, \redb^\top J_{\Nf}\redb=J_{2\Nrh(t)}\}.$
%


\subsection{Reduced dynamics via Dirac--Frenkel variational principle}

To derive the evolution of the approximate state $\rsR\in\Mn$, we consider
$T_{\rsR}{\Mn}$, the tangent space of $\Mn$ at $\rsR$, that is the linear subspace of $\VNp$ containing the derivatives of all paths on $\Mn$ passing through $\rsR$.
The Dirac--Frenkel variational principle \cite{Fren34,ML64} determines the approximate trajectory $t\mapsto \rsR(t)\in\Mn$ from the condition that the vector field $\dot{\rsR}\in T_{\rsR}{\Mn}$ given by the time derivative satisfies, at every time $t$, 
\begin{equation*}
\dot{\rsR}\in\argmin_{V\in T_{\rsR}{\Mn}}\norm{V-\hvf(\rsR)}
\end{equation*}
in a suitable norm $\norm{\cdot}$.
In our case, this amounts to projecting the vector field $\hvf$ at $\rsR$ to the tangent space at $\rsR$, namely
\begin{equation}\label{eq:red}
    \dot{\rsR}=\Pi_{T_{\rsR}{\Mn}}\hvf(\rsR),
\end{equation}
where $\Pi_{T_{\rsR}{\Mn}}: \mathbb{R}^{2N\times p}\rightarrow T_{\rsR}{\Mn}$ is the orthogonal projection with respect to the symplectic $2$-form, i.e.,
$\omega\big(\hvf-\Pi_{T_{\rsR}{\Mn}}\hvf,V\big) = 0$ for any $V\in T_{\rsR}{\Mn}$, and it is defined \cite{P19,MN17} as
\begin{equation}\label{eq:proj}
    \Pi_{T_{\rsR}\Mn}(\hvf)=(I_{\Nf}-\redb\redb^\top)(\hvf\redC^\top + J_{\Nf}\hvf\redC^\top J_{2\Nrh}^\top)M^{-1}(\redC)\redC+\redb\redb^\top \hvf,
\end{equation}
where $M(\redC) := \redC\redC^\top+J_{2\Nrh}^\top \redC\redC^\top J_{2\Nrh}$.
Using the explicit expression for the projection $\Pi_{T_{\rsR}{\Mn}}$ given in \eqref{eq:proj} and the factorization \eqref{eq:LR}, the reduced dynamics \eqref{eq:red} can be equivalently re-written as
\begin{subequations}\label{eq:ROM}
\begin{empheq}[left = \empheqlbrace\,]{align}
& \dot{\redb}  = (I_{\Nf}-\redb\redb^\top)\big(J_{\Nf}\nabla\Hcal(\redb\redC)\redC^\top{+}\nabla\Hcal(\redb\redC)\redC^\top J_{2\Nrh}\big)M^{-1}(\redC), 
\label{eq:redb}\\ 
& \dot{\redC}  = J_{2\Nrh}\nabla\Hcal_r(\redC), 
\label{eq:redC}
\end{empheq}
\end{subequations}
where $\Hcal_r := \Hcal\circ\redb$. The initial condition is obtained from the projection of $\fsR(t^0)$ onto $\mathcal{M}_{\Nrh(t^0)}$.
Note that the evolution \eqref{eq:redC} of $\redC$ is a Hamiltonian flow with $2\Nrh$ degrees of freedom per parameter, and the Hamiltonian remains an invariant of motion, while the solution $A(t)$ of \eqref{eq:redb} belongs to $\Ucal_{\Nrh(t)}$ for any $t\in\Tcal$.

It can be shown \cite{P19} that, for \emph{linear} systems \eqref{eq:HamSystem}, the computational cost required to solve \eqref{eq:ROM} is linear in the dimension $\Nfh$ of the full order problem and in the number $\np$ of parameters, but it depends on the product $\Nfh\np$. Since both $\Nfh$ and $\np$ can be potentially large, solving the reduced model \eqref{eq:ROM} can still be computationally demanding. Additionally, when the Hamiltonian vector field is a \emph{nonlinear} function of the state, it is well known that 
the reduced system can be as expensive to solve as the full order one despite dimensionality reduction.

To overcome these limitations we propose: an adaptive gradient-preserving hyper-reduction strategy to efficiently deal with nonlinearities of the Hamiltonian in the evolution \eqref{eq:redC} of the coefficients $\redC$, even in the presence of a large number of parameters (\Cref{sec:adap_GPEIM});
a rank-adaptive algorithm to update the dimension of both the reduced basis space and the EIM space to optimize the approximability properties of the reduced state at a low computational cost (\Cref{sec:rank-adaptivity});
two parameter sampling strategies to efficiently perform adaptivity of the reduced basis and EIM spaces (\Cref{sec:samples} and \Cref{sec:pAstar_selection}).


\section{Adaptive gradient-preserving empirical interpolation}
\label{sec:adap_GPEIM}

Let us assume that the Hamiltonian is a nonlinear function of the state, with a general, typically non polynomial, nonlinearity.
If the Hamiltonian has also linear or quadratic components, these can be excluded from the hyper-reduction process whenever they can be evaluated at a cost that is independent of $\Nfh$.
%
The major computational bottleneck in solving equation \eqref{eq:redC} comes from the evaluation of the nonlinear reduced Hamiltonian $\Hcal_r$.
If traditional hyper-reduction techniques are applied to the full or to the reduced Hamiltonian gradient, the resulting approximate function is no longer a gradient field, leading to unstable or inaccurate approximations \cite{PM16,PV22}.
In a recent work \cite{PV22} by the authors, an adaptive gradient-preserving hyper-reduction technique has been proposed to derive efficient hyper-reduced models that retain the Hamiltonian structure of the flow.
However, this method only deals with one parameter at the time.
In this work, we extend this strategy to efficiently address the case of multiple test parameters. First, we recall the method of \cite{PV22} for $\np=1$ and then, in \Cref{sec:adaptEIM}, we consider the general case $\np\geq 2$.

Starting from the reduced order model \eqref{eq:ROM} where the reduced basis $\redb\in\Ucal_{\Nrh}$ is assumed to be given and $\redC\in\Rbb^{2\Nrh}$, \cite{PV22} proposes a decomposition of the nonlinear (part of the) reduced Hamiltonian of the form
\begin{equation}\label{eq:decomp}
    \Hcal_r(\redC) = \sum_{i=1}^\nd \cdec_i\Gdec_i(\redb\redC)=\cdec^{\top}\Gdec(\redb\redC),\qquad\mbox{for all}\; \redC\in\Rbb^{2\Nrh},
\end{equation}
where $\nd\in\mathbb{N}$, $\cdec\in\Rbb^\nd$ is a constant vector and $\Gdec:\Rbb^{\Nf}\to\Rbb^\nd$.
For hyper-reduction to be effective, the choice of the decomposition should maximize the sparsity of $h$ as a function of the reduced state $\redC$, which typically leads to
$\nd$ scaling with the full order dimension $\Nfh$. A discussion on this aspect is provided before \Cref{sec:adaptEIM} and more details can be found in \cite[Remark 3.1]{PV22}.
We point out that a sparse decomposition of the Hamiltonian is available whenever \eqref{eq:HamSystem} stems from a finite element or finite volume discretization of a Hamiltonian partial differential equation. In this case, each entry of $\Gdec$ represents the contribution to the total Hamiltonian of a single mesh element, and, as such, it only depends on the set of local degrees of freedom.
Examples of Hamiltonians that can be decomposed in such form are given in \cite{MR99} and \cite[Chapter I]{HLW06}.

Using the decomposition \eqref{eq:decomp}, the gradient of the nonlinear term $\Hcal_r$ reads
$\nabla\Hcal_r(\redC)=\redb^{\top}\JGdec^{\top}(\redb\redC)\cdec$, 
where $\JGdec:\Rbb^{\Nf}\rightarrow\Rbb^{\nd\times\Nf}$ is the Jacobian of $\Gdec$.
The next step consists in using the empirical interpolation method \cite{BMNP04} to approximate the Jacobian mapped to the reduced space, i.e., the map $\redC\in\Rbb^{2\Nrh}\mapsto\JGdec(\redb\redC)\redb\in\Rbb^{\nd\times2\Nrh}$.
%
More in details, let $\nm:\Tcal\rightarrow\mathbb{N}_{>0}$ be such that $\nm(t)\ll\nd$ for any $t\in\Tcal$.
Consider a set of $\nm=\nm(t)$ basis functions arranged as columns of the so-called EIM matrix $\deimb=\deimb(t)\in\Rbb^{\nd\times\nm(t)}$.
Let $\{i_1,\dots,i_{\nm(t)}\}\subset\{1,\dots,\nd\}$ be a set of interpolation indices that form the matrix $\deimii=\deimii(t):=[\mathbf{e}_{i_1}|\dots|\mathbf{e}_{i_{\nm(t)}}]\in\Rbb^{\nd\times\nm(t)}$ where $\mathbf{e}_i$ is the $i$th unit vector of $\mathbb{R}^{\nd}$.
Given $\deimb$ and $\deimii$, the empirical interpolation operator is defined as
\begin{equation}\label{eq:EIMinterp}
    \Ical_{\nm}\big(\JGdec(\redb\redC)\redb\big)=\deimb\deimC,\qquad
    \deimC=\deimC(t)\in\Rbb^{\nm\times2\Nrh},
\end{equation}
with $\deimC$ obtained from the interpolation constraints
$\deimii^\top \JGdec(\redb\redC)\redb=\deimii^\top\deimb\deimC$.
If $\deimii^\top\deimb$ is non-singular \cite{CS10}, the resulting empirical interpolation of the reduced Jacobian at time $t\in\Tcal$ reads, for $\redC(t)\in\Rbb^{2\Nrh(t)}$,
\begin{equation}\label{eq:DEIMproj}
\Pbb(t) \JGdec(\redb(t)\redC(t))\redb(t),
\qquad\mbox{where}\qquad
\Pbb:=\deimb(\deimii^\top\deimb)^{-1}\deimii^\top\in\Rbb^{\nd\times\nd}
\end{equation}
is the EIM projection matrix.
The EIM projection introduced in \eqref{eq:DEIMproj} can be equivalently seen as an approximation of the nonlinear term $\Hcal_r$ in \eqref{eq:decomp} by
\begin{equation}\label{eq:hhr_and_gradhhr}
    \hrham_t(\redC):=\cdec^\top\Pbb(t) \Gdec(\redb\redC)\qquad\mbox{for all}\; \redC\in\Rbb^{2\Nrh},\, t\in\Tcal.
\end{equation}
With the nonlinear term of the Hamiltonian approximated as in \eqref{eq:hhr_and_gradhhr}, the resulting hyper-reduced problem for the expansion coefficients matrix reads
\begin{equation*}
  \dot{\redC}(t)=J_{2\Nrh}\nabla \hrham_t(\redC(t)),\qquad t\in\Tcal,
\end{equation*}
with
$\nabla\hrham_t(\redC(t)) = \redb(t)^{\top}\JGdec^{\top}(\redb(t)\redC(t))\Pbb(t)^{\top}\cdec$.
This term can typically be evaluated efficiently: if $\nd=\Nfh$ and the $i$th entry of the vector-valued function $\Gdec$ only depends on, for example, the $i$th and $(i+\Nfh)$th components of the state, the Jacobian $\JGdec$ is the concatenation of two $\nd\times\nd$ diagonal matrices. This is the case for the Hamiltonian formulation of the Schr\"odinger equation that will be considered in the numerical experiments of \Cref{sec:numexp}. In this case, the gradient of the hyper-reduced Hamiltonian can be re-written as $\redb^\top \JGdec^\top(\redb\redC)\deimP^\top \cdec=\widetilde{\redb}^\top\widetilde{\nabla\Hcal}(\redb\redC)$
where $\widetilde{\redb}\in\Rbb^{2\nm\times2\Nrh}$ and $\widetilde{\nabla\Hcal}(\redb\redC)\in\Rbb^{2\nm}$ is the gradient of the full order system evaluated only at the rows corresponding to the EIM indices.
In general, the complexity of evaluating the hyper-reduced gradient $\nabla\hrham_t$ is independent of the full order dimension $\Nfh$ whenever each component of $\Gdec$ depends on $\nnzJ\ll\Nfh$ entries of the state, so that the Jacobian matrix $\JGdec$ is sparse.
This condition is typically ensured by the decomposition \eqref{eq:decomp}.

In \cite{PV22} a strategy to construct the EIM pair $(\deimb,\deimii)$ is proposed in the case where the reduced basis space is fixed over time and the problem does not depend on parameters, namely when the Hamiltonian system needs to be solved only once.
When applied to the parametric case, such strategy requires the construction and update of one EIM pair per parameter. In the presence of a high number of parameters this option is not computationally affordable. In the next section we discuss how to address the parametric hyper-reduction with a gradient-preserving and parameter-efficient adaptive strategy.

\subsection{Adaptivity of the EIM basis and interpolation points}
\label{sec:adaptEIM}

Unlike global reduced basis methods, dynamical low-rank approximation is not data-driven, i.e., it does not rely on an offline phase to build the reduced space. 
It is then necessary to adapt the EIM space and interpolation indices during the simulation to achieve sufficiently accurate approximations \cite{Peher15,Peher20,NB23}. In \cite{Peher20}, extended from \cite{Peher15}, the EIM basis is adapted over time via a low-rank correction that acts at the sampling points which minimize the DEIM residual.
These works only deal with the case where the system has to be solved for one test parameter at a time and the reduced basis space is fixed. In this section, we develop an adaptive hyper-reduction algorithm to efficiently address the situation where an efficient and accurate approximation has to be provided at all times for a large number of test parameters.

Let us assume, for the time being, that the EIM pair $(\deimb,\deimii)\in\Rbb^{\nd\times\nm}\times\Rbb^{\nd\times\nm}$ is given and that the reduced basis $\redb\in\Ucal_{\Nrh}$ is fixed. In this section we propose a strategy to update the EIM pair to the quantity $(\deimbn,\deimiin)\in\Rbb^{\nd\times\nmp}\times\Rbb^{\nd\times\nmp}$ so that the gradient structure of the Hamiltonian vector field is preserved and the updated EIM space provides a better approximation of the nonlinear Hamiltonian vector field for all test parameters.
Let us consider the matrix-valued function $(\redb,\redC)\in\Ucal_{\Nrh}\times \Rbb^{2\Nrh\times\np}\mapsto\deimsn(\redb,\redC)=\deimsn=[\deimsn^{(1)}|\dots|\deimsn^{(\np)}]\in\Rbb^{\nd\times 2\Nrh\np}$, where $\deimsn^{(k)}$ denotes the reduced Jacobian associated to the $k$th parameter, namely
\begin{equation}\label{eq:Ni}
    \deimsn^{(k)}=\JGdec(\redb\redC_k)\redb
    \in\Rbb^{\nd\times 2\Nrh} \qquad k=1,\dots,\np,
\end{equation}
with $\redC_k$ the $k$th column of the matrix $\redC\in\Rbb^{2\Nrh\times\np}$.
The EIM approximation of $\deimsn$ with the current EIM basis $\deimb$ and interpolation indices $\deimii$ is given by $\deimb\deimC$ as in \eqref{eq:EIMinterp}, with coefficient matrix
\begin{equation}\label{eq:Cmatrix}
    \deimC=(\deimii^\top \deimb)^{-1}\deimii^\top\deimsn
    = [\deimC^{(1)}|\dots|\deimC^{(\np)}]\in\Rbb^{\nm\times 2\Nrh\np},
\end{equation}
and $\deimC^{(k)}:=(\deimii^\top \deimb)^{-1}\deimii^\top \deimsn^{(k)}\in\Rbb^{\nm\times 2\Nrh}$.
We define the EIM residual as the quantity $\deimR:=\deimb\deimC-\deimsn\in\Rbb^{\nd\times 2\Nrh\np}$.

Two quantities play a crucial role in the proposed EIM update: a parameter subsample and a set of sample indices.
Let us consider a subset of $\npUs\leq\np$ sample parameters $\{\prm_{i_1},\dots,\prm_{i_{\npUs}}\}\subset\{\prm_1,\dots,\prm_\np\}$ and define the matrices
\begin{equation*}
    \deimsns=[\deimsn^{(i_1)}|\dots|\deimsn^{(i_{\npUs})}]\in\Rbb^{\nd\times 2\Nrh\npUs} \quad\mbox{and}\quad
    \deimsnc=[\deimsn^{(i_{\npUs+1})}|\dots|\deimsn^{(i_\np)}]\in\Rbb^{\nd\times 2\Nrh(\np-\npUs)}.
\end{equation*}
Analogously one can define
$\deimCs=(\deimii^\top \deimb)^{-1}\deimii^\top\deimsns\in\Rbb^{\nm\times 2\Nrh\npUs}$, $\deimCc=(\deimii^\top \deimb)^{-1}\deimii^\top\deimsnc\in\Rbb^{\nm\times 2\Nrh(\np-\npUs)}$, and
$\deimRs:=\deimb\deimCs-\deimsns\in\Rbb^{\nd\times 2\Nrh\npUs}$. 
We also consider a set of $\nms\geq\nm$ sample indices $\{i_1,\dots,i_{\nms}\}\subset\{1,\dots,\nd\}$ and introduce the matrices
\begin{equation}\label{eq:Smatrix}
    \deimsi=[\mathbf{e}_{i_1}|\dots|\mathbf{e}_{i_{\nms}}]\in\Rbb^{\nd\times\nms}
    \quad\mbox{and}\quad
    \deimsic=[\mathbf{e}_{i_{\nms+1}}|\dots|\mathbf{e}_{i_\nd}]\in\Rbb^{\nd\times(\nd-\nms)}.
\end{equation}
The updated EIM basis $\deimbn$ is obtained as a rank-$\ndeimr$ correction of the rows of $\deimb$ corresponding to the sample indices, that is
\begin{equation*}
    \deimsi^\top \deimbn = \deimsi^\top \deimb+\rup,
    \qquad \deimsic^\top \deimbn=\deimsic^\top \deimb,
\end{equation*}
where $\rup\in\Rbb^{\nms\times\nm}$ has rank $\ndeimr$. Note that this trivially implies that $\ndeimr\leq\min(\nms,\nm)=\nm$. The update $\rup$ is chosen to minimize the quantity
\begin{equation*}
    \sum_{\ell=1}^{\npUs}\lVert \deimsi^\top(\deimbn\deimC^{(i_\ell)}-\deimsn^{(i_\ell)})\rVert_F^2=\lVert \deimsi^\top(\deimbn\deimCs-\deimsns)\rVert_F^2=\lVert \deimsi^\top \deimRs+\rup \deimCs\rVert_F^2.
\end{equation*}
In other words, the rank-$\ndeimr$ update $\rup$ minimizes, at the sample indices $\deimsi$, the EIM residual associated to the sample parameters $\{\prm_{i_1},\dots,\prm_{\npUs}\}$. Clearly, the choice of the sample indices and parameters is crucial and will be discussed in \Cref{sec:samples}.


In \Cref{prop:DEIM_update} we derive the update $\rup$ by solving the corresponding minimization problem, for any $1\leq\ndeimr\leq\rank(\deimCs)$. First, we recast the problem in an equivalent form as described in the following result.

\begin{lemma}\label{lemma:min_prob_equivalence_v3}
    Let $\Ocal(\alphaup,\betaup):=\lVert M_1+\alphaup\betaup^\top M_2\rVert_F^2$, with given $M_1\in\Rbb^{q_1\times g}$ and $M_2\in\Rbb^{q_2\times g}$. Then, for $1\leq\ndeimr\leq q_2$, it holds
    \begin{equation*}
        \min_{(\alphaup,\betaup)\in\Vr} \Ocal(\alphaup,\betaup)=\min_{(\alphaup,\betaup)\in\VrC} \Ocal(\alphaup,\betaup),
    \end{equation*}
    where 
    \begin{equation*}
        \begin{aligned}
        & \Vr:=\{(\alphaup,\betaup)\in\Rbb^{q_1\times \ndeimr}\times\Rbb^{q_2\times\ndeimr}, \rank(\alphaup)=\rank(\betaup)=\ndeimr\},\\
        & \VrC:=\{(\alphaup,\betaup)\in\Rbb^{q_1\times \ndeimr}\times\Rbb^{q_2\times\ndeimr}, \rank(\alphaup)=\rank(\betaup)=\ndeimr,\,\alphaup^\top\alphaup\mbox{ diagonal}\}.
        \end{aligned}
    \end{equation*} 
\end{lemma}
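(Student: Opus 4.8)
The plan is to prove the asserted equality of minima by a gauge-fixing argument. Since $\VrC\subseteq\Vr$ by definition, the inequality $\min_{\Vr}\Ocal\le\min_{\VrC}\Ocal$ is immediate, and the whole content lies in the reverse inequality $\min_{\VrC}\Ocal\le\min_{\Vr}\Ocal$. To obtain it, I would show that for every $(\alphaup,\betaup)\in\Vr$ there exists $(\tilde\alphaup,\tilde\betaup)\in\VrC$ with $\Ocal(\tilde\alphaup,\tilde\betaup)=\Ocal(\alphaup,\betaup)$, so that every value attained on $\Vr$ is also attained on $\VrC$.

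The key observation is that $\Ocal$ depends on the pair only through the product $\alphaup\betaup^\top$, and this product is invariant under the transformation $(\alphaup,\betaup)\mapsto(\alphaup G,\,\betaup G^{-\top})$ for any invertible $G\in\Rbb^{\ndeimr\times\ndeimr}$, since $(\alphaup G)(\betaup G^{-\top})^\top=\alphaup\,GG^{-1}\,\betaup^\top=\alphaup\betaup^\top$. Because $G$ is invertible, the full-rank conditions $\rank(\alphaup G)=\rank(\betaup G^{-\top})=\ndeimr$ are preserved. It therefore suffices to choose $G$ making $(\alphaup G)^\top(\alphaup G)=G^\top(\alphaup^\top\alphaup)G$ diagonal. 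Here I would use that $\rank(\alphaup)=\ndeimr$ forces $\alphaup$ to have full column rank, so $\alphaup^\top\alphaup\in\Rbb^{\ndeimr\times\ndeimr}$ is symmetric positive definite and admits an orthogonal diagonalization $\alphaup^\top\alphaup=Q\Lambda Q^\top$. Taking $G=Q$ (so $G^{-\top}=Q$) and setting $\tilde\alphaup=\alphaup Q$, $\tilde\betaup=\betaup Q$ yields $\tilde\alphaup^\top\tilde\alphaup=\Lambda$ diagonal with $(\tilde\alphaup,\tilde\betaup)\in\VrC$, while $\tilde\alphaup\tilde\betaup^\top=\alphaup\betaup^\top$ gives $\Ocal(\tilde\alphaup,\tilde\betaup)=\Ocal(\alphaup,\betaup)$, as required.

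I do not anticipate a genuine obstacle: the argument is a change of basis in the $\ndeimr$-dimensional coordinate that couples $\alphaup$ and $\betaup$, and the only point needing care is the existence of the orthogonal diagonalization, which is guaranteed precisely by the hypothesis $\rank(\alphaup)=\ndeimr$ ensuring $\alphaup^\top\alphaup$ is positive definite. I would also remark that the claim concerns equality of the two minimal values, not of minimizers, so no compactness or attainment issue arises; if one wished, the same transformation shows that any minimizer over $\Vr$ is mapped to a minimizer over $\VrC$ with identical objective value.
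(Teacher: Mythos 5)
Your proposal is correct and takes essentially the same approach as the paper's proof: both arguments gauge-fix using the invariance of the product $\alphaup\betaup^\top$ under $(\alphaup,\betaup)\mapsto(\alphaup G,\betaup G^{-\top})$ for invertible $G$, the only difference being that the paper realizes $G$ via an unnormalized QR factorization of $\alphaup$ (with unit upper triangular factor) while you use the orthogonal diagonalization of $\alphaup^\top\alphaup$ --- a cosmetic variation. A minor bonus of your version is that, by transforming an \emph{arbitrary} element of $\Vr$ rather than a presumed minimizer as the paper does, you show the two image sets of $\Ocal$ coincide and thus sidestep any question of whether the minimum over $\Vr$ is attained.
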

\begin{proof}
Let $m_1:=\min_{(\alphaup,\betaup)\in\Vr}\Ocal(\alphaup,\betaup)$ and $m_2:=\min_{(\alphaup,\betaup)\in\VrC}\Ocal(\alphaup,\betaup)$.
Clearly, $m_1\leq m_2$ since $\VrC\subset\Vr$.

Let $(\overline{\alphaup},\overline{\betaup})\in\Vr$ be such that $\Ocal(\overline{\alphaup},\overline{\betaup})=m_1$.
Let $\overline{Q}$ be the matrix with orthogonal columns and $\overline{W}$ the upper triangular with ones on the main diagonal obtained from the QR factorization of $\overline{\alphaup}$ without normalization. Define $\overline{Z}:=\overline{\betaup}\overline{W}^\top$. Then $(\overline{Q},\overline{Z})\in\VrC$ and $\Ocal(\overline{Q},\overline{Z})=\Ocal(\overline{\alphaup},\overline{\betaup})=m_1$, which implies that $m_2\leq \Ocal(\overline{Q},\overline{Z})=m_1$.
Hence, we can conclude that $m_1=m_2$.
\end{proof}

\begin{proposition}\label{prop:DEIM_update}
    Assume we are given $\deimsi\in\Rbb^{\nd\times\nms}$, $\deimRs\in\Rbb^{\nd\times2\Nrh\npUs}$, and $\deimCs\in\Rbb^{\nm\times 2\Nrh\npUs}$ with $\nrc:=\rank(\deimCs)\leq\nm$. Let $\Ucal^*\Sigma^*(\Vcal^*)^\top$ be the compact singular value decomposition of $\deimCs$, with $\Ucal^*\in\Rbb^{\nm\times\nrc}$, $\Sigma^*\in\Rbb^{\nrc\times\nrc}$, $\Vcal^*\in\Rbb^{\Nrh\npUs\times\nrc}$. Let $\Ocal(\alphaup,\betaup):=\lVert\deimRss+\alphaup\betaup^\top \deimCs\rVert_F^2$.
    Then, for any $1\leq\ndeimr\leq\rank(\deimRss\Vcal^*)\leq\nrc$, the solution of the minimization problem $\min_{(\alphaup,\betaup)\in\Vr}\Ocal(\alphaup,\betaup)$ is
    \begin{equation*}
        \alphaup\betaup^\top = -(\deimRss\Vcal^*)_\ndeimr(\Sigma^*)^{-1}(\Ucal^*)^\top,
    \end{equation*}
    where $(\deimRss\Vcal^*)_\ndeimr$ denotes the $\ndeimr$-truncated SVD of $\deimRss\Vcal^*$. Moreover, it holds
    \begin{equation*}
        \min_{(\alphaup,\betaup)\in\Vr}\Ocal(\alphaup,\betaup)=\lVert \deimRss\rVert_F^2 - \sum_{i=1}^\ndeimr\overline{\sigma}_i^2,
    \end{equation*}
    where $\overline{\sigma}_1\geq\dots\geq\overline{\sigma}_\ndeimr$ are the $\ndeimr$ dominant singular values of $\deimRss\Vcal^*\in\Rbb^{\nm\times \nrc}$. 
    In particular, if $\ndeimr=\rank(\deimRss\Vcal^*)$, then $\alphaup\betaup^\top=-\deimRss(\deimCs)^{\dagger}$ and
    \begin{equation}\label{eq:min_DEIMupdate}
        \min_{(\alphaup,\betaup)\in\Vr}\Ocal(\alphaup,\betaup)=\lVert\deimRss\rVert_F^2 - \lVert \deimRss\Vcal^*\rVert_F^2.
    \end{equation}
\end{proposition}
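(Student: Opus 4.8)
The plan is to regard $\Ocal$ as a function of the single matrix $X:=\alphaup\betaup^\top$, using the fact formalized in \Cref{lemma:min_prob_equivalence_v3} that $\Ocal$ depends on $(\alphaup,\betaup)$ only through the product $\alphaup\betaup^\top$; this lets me replace the constrained minimization over $\Vr$ by the minimization of $\lVert\deimRss+X\deimCs\rVert_F^2$ over $X\in\Rbb^{\nms\times\nm}$ of rank exactly $\ndeimr$. First I would insert the compact SVD $\deimCs=\Ucal^*\Sigma^*(\Vcal^*)^\top$ and complete the columns of $\Vcal^*$ to an orthonormal basis $[\,\Vcal^*\,|\,\Vcal^*_\perp\,]$ of $\Rbb^{2\Nrh\npUs}$. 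Right multiplication by this orthogonal matrix preserves the Frobenius norm, and since $\deimCs\Vcal^*=\Ucal^*\Sigma^*$ while $\deimCs\Vcal^*_\perp=0$, the objective splits as
\begin{equation*}
\Ocal(\alphaup,\betaup)=\lVert\deimRss\Vcal^*+X\Ucal^*\Sigma^*\rVert_F^2+\lVert\deimRss\Vcal^*_\perp\rVert_F^2,
\end{equation*}
where the second term is independent of $X$ and, by orthogonality of $[\,\Vcal^*\,|\,\Vcal^*_\perp\,]$, equals $\lVert\deimRss\rVert_F^2-\lVert\deimRss\Vcal^*\rVert_F^2$.

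The second step minimizes the $X$-dependent term. Setting $Y:=X\Ucal^*\Sigma^*\in\Rbb^{\nms\times\nrc}$, I would note that $\Ucal^*\Sigma^*$ has full column rank $\nrc$ with pseudoinverse $(\Ucal^*\Sigma^*)^\dagger=(\Sigma^*)^{-1}(\Ucal^*)^\top$; hence $X\mapsto Y$ sends rank-$\ndeimr$ matrices to matrices of rank at most $\ndeimr$, and conversely any $Y$ of rank $\ndeimr$ is attained by $X=Y(\Sigma^*)^{-1}(\Ucal^*)^\top$, which (being obtained by right multiplication with a full-row-rank factor) has the same rank as $Y$. The problem therefore reduces to $\min_{\rank(Y)\le\ndeimr}\lVert\deimRss\Vcal^*+Y\rVert_F^2$, solved by the Eckart--Young--Mirsky theorem: the optimal $Y$ is $-(\deimRss\Vcal^*)_\ndeimr$, the $\ndeimr$-truncated SVD of $-\deimRss\Vcal^*$, with value $\lVert\deimRss\Vcal^*\rVert_F^2-\sum_{i=1}^\ndeimr\overline{\sigma}_i^2$. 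Substituting back gives $X=\alphaup\betaup^\top=-(\deimRss\Vcal^*)_\ndeimr(\Sigma^*)^{-1}(\Ucal^*)^\top$ and, re-adding the constant term, the claimed minimum $\lVert\deimRss\rVert_F^2-\sum_{i=1}^\ndeimr\overline{\sigma}_i^2$.

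Finally I would check admissibility and the special case. The hypothesis $1\le\ndeimr\le\rank(\deimRss\Vcal^*)$ forces $\overline{\sigma}_\ndeimr>0$, so $(\deimRss\Vcal^*)_\ndeimr$ — and thus $X$ — has rank exactly $\ndeimr$; factoring $X$ into two rank-$\ndeimr$ matrices exhibits a minimizer in $\Vr$, which is the crux of the argument, namely that the infimum of the relaxed rank-$\le\ndeimr$ problem is attained inside the rank-exactly-$\ndeimr$ feasible set (this is precisely where \Cref{lemma:min_prob_equivalence_v3} and the rank hypothesis are needed). For $\ndeimr=\rank(\deimRss\Vcal^*)$ the truncation is vacuous, $(\deimRss\Vcal^*)_\ndeimr=\deimRss\Vcal^*$, and using $(\deimCs)^\dagger=\Vcal^*(\Sigma^*)^{-1}(\Ucal^*)^\top$ one identifies $X=-\deimRss\Vcal^*(\Sigma^*)^{-1}(\Ucal^*)^\top=-\deimRss(\deimCs)^\dagger$; the value collapses to $\lVert\deimRss\rVert_F^2-\lVert\deimRss\Vcal^*\rVert_F^2$ since keeping all singular values gives $\sum_{i=1}^\ndeimr\overline{\sigma}_i^2=\lVert\deimRss\Vcal^*\rVert_F^2$. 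The only delicate points are this rank bookkeeping in the change of variables $Y=X\Ucal^*\Sigma^*$ and the passage between the factored form $\alphaup\betaup^\top$ and the single matrix $X$; the remainder is the standard low-rank approximation computation.
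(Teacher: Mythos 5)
Your proof is correct, but it follows a genuinely different route from the paper's. The paper invokes \Cref{lemma:min_prob_equivalence_v3} to restrict to factors $\alphaup$ with orthogonal columns, expands $\Ocal$ via trace identities, and derives first-order stationarity conditions in the variables $(\alphaup_i,\gamma_i)$ with $\gamma=\Sigma^*(\Ucal^*)^\top\betaup$: the stationary points are shown to be left singular vectors of $\deimRss\Vcal^*$, and the global minimizer is then identified by comparing objective values across stationary points. You instead collapse the factored parametrization into the single matrix $X=\alphaup\betaup^\top$ (whose feasible set is exactly the rank-$\ndeimr$ matrices), split the objective by completing $\Vcal^*$ to an orthonormal basis so that the constant term $\lVert\deimRss\rVert_F^2-\lVert\deimRss\Vcal^*\rVert_F^2$ separates off, change variables $Y=X\Ucal^*\Sigma^*$ with careful rank bookkeeping, and invoke Eckart--Young--Mirsky. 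Your route is shorter and gets global optimality for free from a standard theorem, whereas the paper's calculus-based argument only produces candidate stationary points and must then compare them; conversely, the paper's proof is self-contained (no external low-rank approximation theorem) and exhibits the full family of stationary points of the restricted problem. Two remarks. First, your attribution of the reduction to \Cref{lemma:min_prob_equivalence_v3} is off: that lemma states the equivalence of minimizing over $\Vr$ and over $\VrC$ (orthogonal columns of $\alphaup$), not that $\Ocal$ depends only on the product $\alphaup\betaup^\top$ — the latter is immediate from the definition of $\Ocal$, and in fact your argument never needs the lemma at all, which is an economy your approach buys. Second, you correctly identified and handled the one genuinely delicate point, namely that the relaxed rank-$\le\ndeimr$ minimum is attained at rank exactly $\ndeimr$ under the hypothesis $\ndeimr\leq\rank(\deimRss\Vcal^*)$, so the optimum pulls back to a feasible pair in $\Vr$; without that check the passage to the relaxed problem would only give a lower bound.
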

\begin{proof}
    Let $\ndeimr\in\{1,\dots,\nm\}$ be fixed and let $\alphaup_i\in\Rbb^{\nms}$ and $\betaup_i\in\Rbb^{\nm}$, for $i=1,\dots,\ndeimr$, denote the column vectors of $\alphaup$ and $\betaup$, respectively, so that $\alphaup=[\alphaup_1|\dots|\alphaup_\ndeimr]\in\Rbb^{\nms\times\ndeimr}$ and $\betaup=[\betaup_1|\dots|\betaup_\ndeimr]\in\Rbb^{\nm\times\ndeimr}$. Owing to \Cref{lemma:min_prob_equivalence_v3}, we look for $(\alphaup,\betaup)$ in $\VrC$. Using the definition of the Frobenius norm and the linearity and cyclic properties of the trace, together with the fact that the columns of $\alphaup$ are orthogonal, the cost function can be rewritten as
    \begin{equation}\label{eq:cost_function_expanded}
        \Ocal(\alphaup,\betaup) 
         =\lVert \deimRss\rVert_F^2+\sum_{i=1}^\ndeimr\lVert\alphaup_i\rVert^2\lVert(\deimCs)^\top\betaup_i\rVert^2+2\sum_{i=1}^\ndeimr\betaup_i^\top \deimCs(\deimRss)^\top\alphaup_i.
    \end{equation}
    Defining $\gamma:=\Sigma^*(\Ucal^*)^\top\betaup\in\Rbb^{\nrc\times\ndeimr}$ we find
    \begin{equation*}
        \Ocal(\alphaup,\betaup)=\widehat{\Ocal}(\alphaup,\gamma):=\lVert \deimRss\rVert_F^2+\sum_{i=1}^\ndeimr\lVert\alphaup_i\rVert^2\lVert\gamma_i\rVert^2+2\sum_{i=1}^\ndeimr\gamma_i^\top(\deimRss\Vcal^*)^\top\alphaup_i.
    \end{equation*}
    Computing the partial derivatives of $\widehat{\Ocal}$ with respect to $\alphaup_i$ and $\gamma_i$, for $i=1,\dots,\ndeimr$, and setting them equal to zero we obtain
    \begin{equation}\label{eq:alphai_beta_i}
        \gamma_i=-\displaystyle\frac{(\deimRss\Vcal^*)^\top\alphaup_i}{\lVert\alphaup_i\rVert^2}, \qquad\deimRss\Vcal^*(\deimRss\Vcal^*)^\top\alphaup_i=\sigma_i^2\alphaup_i,
    \end{equation}
    where $\sigma_i:=\lVert\alphaup_i\rVert^{-1}\lVert(\deimRss\Vcal^*)^\top\alphaup_i\rVert$ and $\alphaup_i\neq0$ for all $i$, otherwise $\alphaup$ would not have rank $\ndeimr$. This shows that $(\alphaup,\betaup)$ is a stationary point of $\Ocal$ if and only if the columns of $\alphaup$ are left singular vectors of $\deimRss\Vcal^*$ and the columns of $\betaup$ are given by $\betaup_i=\Ucal^*(\Sigma^*)^{-1}\gamma_i$. To determine which choice of $\alphaup_i$ corresponds to the global minimum of $\Ocal$, we observe that, by inserting the expression for $\gamma_i$ from \eqref{eq:alphai_beta_i} into \eqref{eq:cost_function_expanded}, we get
    \begin{equation*}
        \Ocal(\alphaup,\betaup)=
        \lVert \deimRss\rVert_F^2-\sum_{i=1}^\ndeimr\sigma_i^2.
    \end{equation*}
    Therefore, we select $\alphaup_i$ as left singular vectors of $\deimRss\Vcal^*$ corresponding to the $\ndeimr$ largest singular values. Moreover, we may assume that $\ndeimr\leq\rank(\deimRss\Vcal^*)$, since including singular vectors corresponding to zero singular values does not affect the cost function. In other words, if $\deimRss\Vcal^*=\overline{\Ucal}\overline{\Sigma}\overline{\Vcal}^\top$ is the singular value decomposition of $\deimRss\Vcal^*$, we set $\alphaup_i=\overline{u}_i$ for $1\leq i\leq \ndeimr$, which gives $\sigma_i=\overline{\sigma}_i$ and $\gamma_i=-\overline{\sigma}_i\overline{v}_i$.
    The optimal update $\alphaup\betaup^\top$ is then
    \[ \sum_{i=1}^\ndeimr\alphaup_i\gamma_i^\top(\Sigma^*)^{-1}(\Ucal^*)^\top
    =-\sum_{i=1}^\ndeimr\overline{\sigma}_i\overline{u}_i\overline{v}_i^\top(\Sigma^*)^{-1}(\Ucal^*)^\top=-(\deimRss\Vcal^*)_\ndeimr(\Sigma^*)^{-1}(\Ucal^*)^\top.
    \]
\end{proof}
In this work, we focus on full-rank updates, namely $\ndeimr=\nrc$, which yields
$\deimsi^\top\deimbn=\deimsi^\top\deimb-\deimsi^\top\deimRs(\deimCs)^{\dagger}$.
After $\deimb$ has been updated to $\deimbn$, the interpolation indices $\deimii$ are also modified accordingly: one may recompute all interpolation indices by applying a greedy algorithm \cite{CS10} or QDEIM \cite{DG16}.
In this work we use QDEIM, which is based on a QR factorization of $(\deimbn)^\top$ with column pivoting, since it enjoys favorable theoretical properties and it is computationally efficient in practice.
Another option is to only replace the indices corresponding to the basis vectors that have undergone the largest rotations in the EIM basis update, as suggested in \cite[Section 4.1]{Peher15}.

\subsection{Selection of sample indices and sample parameters for hyper-reduction}\label{sec:samples}

The update of the EIM basis provided in \Cref{prop:DEIM_update} ensures that, if $\npUs=\np$, the updated EIM basis improves the approximation of the nonlinear term at the current time by lowering the residual compared to the original EIM basis. 
This result is in agreement with the single-parameter case \cite[Theorem 4.1]{PV22}. In fact, using \eqref{eq:min_DEIMupdate} with $\npUs=\np$, the residual associated with the updated EIM basis satisfies
\begin{equation}\label{eq:res_star}
\begin{aligned}
    \lVert \deimbn\deimC-\deimsn\rVert_F^2
    & = \lVert \deimsi^\top(\deimbn\deimC-\deimsn)\rVert_F^2+\lVert\deimsic^\top(\deimbn\deimC-\deimsn)\rVert_F^2\\
    & = \lVert \deimsi^\top \deimR\rVert_F^2-\lVert \deimsi^\top \deimR\Vcal\rVert_F^2+\lVert\deimsic^\top(\deimb\deimC-\deimsn)\rVert_F^2\\
    & = \lVert \deimb\deimC-\deimsn\rVert_F^2-\lVert \deimsi^\top \deimR\Vcal\rVert_F^2<\lVert \deimb\deimC-\deimsn\rVert_F^2.
\end{aligned}
\end{equation}
If $\npUs<\np$ instead, the update ensures an improved approximation for the sample parameters but, in general, not for all parameters.
The next result gives a constructive way to select $\npUs$ sample parameters such that this requirement is satisfied. 
\begin{proposition}\label{prop:delta}
Assume that $(\deimb,\deimii)$ is a given EIM pair and that $\deimsi$ is a given matrix of sample indices.
Let $\deimsn$ be a matrix of nonlinear terms, as defined in \eqref{eq:Ni}, $\deimC:=(\deimii^\top \deimb)^{-1}\deimii^\top\deimsn$ and $\deimR:=\deimb\deimC-\deimsn$.
Let us denote with a superscript $*$ the aforementioned quantities when restricted to a given set of $\npUs\leq\np$ sample parameters.
If $\deimbn$ is the update of the EIM basis $\deimb$ obtained with $\npUs$ parameters and $\ndeimr=\nrc$, as described in \Cref{prop:DEIM_update}, then $\lVert \deimbn\deimC-\deimsn\rVert_F<\lVert \deimb\deimC-\deimsn\rVert_F$ if and only if 
    \begin{equation}\label{eq:delta}
        \dsc:=\lVert \deimsi^\top \deimR\rVert_F^2-\lVert \deimsi^\top (\deimR-\deimRs(\deimCs)^{\dagger}\deimC)\rVert_F^2>0.
    \end{equation}
\end{proposition}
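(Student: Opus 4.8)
The plan is to reduce the claim to a single Pythagorean split of the Frobenius norm across the sample and non-sample index sets, after which $\dsc$ emerges precisely as the gap between the old and new squared residuals. The key observation is that the full-rank update of \Cref{prop:DEIM_update} only modifies the rows of $\deimb$ indexed by $\deimsi$, so the contribution of the residual on the complementary indices $\deimsic$ is identical before and after the update and cancels in the comparison.

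First I would recall the explicit full-rank update ($\ndeimr=\nrc$) established after \Cref{prop:DEIM_update}, namely $\deimsi^\top\deimbn=\deimsi^\top\deimb-\deimsi^\top\deimRs(\deimCs)^{\dagger}$, together with the defining relation $\deimsic^\top\deimbn=\deimsic^\top\deimb$. Since $\{i_1,\dots,i_{\nms}\}$ and $\{i_{\nms+1},\dots,i_\nd\}$ partition $\{1,\dots,\nd\}$, the selection matrices satisfy $\deimsi^\top\deimsi=I_{\nms}$, $\deimsic^\top\deimsi=0$, and $\deimsi\deimsi^\top+\deimsic\deimsic^\top=I_\nd$. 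Combining the two pieces of the update then gives the compact expression $\deimbn-\deimb=-\deimsi\deimsi^\top\deimRs(\deimCs)^{\dagger}$, a matrix whose only nonzero rows lie at the sample indices. From here I would write the updated residual as $\deimbn\deimC-\deimsn=\deimR+(\deimbn-\deimb)\deimC$.

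Next I would apply the orthogonal splitting $I_\nd=\deimsi\deimsi^\top+\deimsic\deimsic^\top$ and the associated Pythagorean identity to obtain $\lVert\deimbn\deimC-\deimsn\rVert_F^2=\lVert\deimsi^\top(\deimbn\deimC-\deimsn)\rVert_F^2+\lVert\deimsic^\top(\deimbn\deimC-\deimsn)\rVert_F^2$, and likewise for $\deimb$. Using $\deimsic^\top\deimsi=0$, the non-sample block reduces to $\deimsic^\top(\deimbn\deimC-\deimsn)=\deimsic^\top\deimR$, which coincides with the corresponding block for $\deimb$; while $\deimsi^\top\deimsi=I_{\nms}$ turns the sample block into $\deimsi^\top(\deimbn\deimC-\deimsn)=\deimsi^\top\bigl(\deimR-\deimRs(\deimCs)^{\dagger}\deimC\bigr)$. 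Subtracting the two squared norms, the identical $\lVert\deimsic^\top\deimR\rVert_F^2$ terms cancel and leave exactly $\lVert\deimb\deimC-\deimsn\rVert_F^2-\lVert\deimbn\deimC-\deimsn\rVert_F^2=\lVert\deimsi^\top\deimR\rVert_F^2-\lVert\deimsi^\top(\deimR-\deimRs(\deimCs)^{\dagger}\deimC)\rVert_F^2=\dsc$. The equivalence $\lVert\deimbn\deimC-\deimsn\rVert_F<\lVert\deimb\deimC-\deimsn\rVert_F \Leftrightarrow \dsc>0$ is then immediate.

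I do not expect a genuine obstacle here: the argument is mechanical once the update is written in the form $\deimbn-\deimb=\deimsi\rup$. The only point requiring care is the bookkeeping of the two halves of the update definition to obtain this compact form, and the verification that the projections onto $\deimsi$ and $\deimsic$ act as claimed; the exact cancellation of the non-sample contribution is what makes the criterion an honest ``if and only if'' rather than a one-sided bound.
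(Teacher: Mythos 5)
Your proof is correct: the compact form $\deimbn-\deimb=-\deimsi\deimsi^\top\deimRs(\deimCs)^{\dagger}$ follows from the definition of the full-rank update, the row-wise Pythagorean split is valid since $\deimsi\deimsi^\top+\deimsic\deimsic^\top=I_\nd$ with $\deimsi^\top\deimsi=I_{\nms}$ and $\deimsic^\top\deimsi=0$, and the resulting identity $\lVert\deimbn\deimC-\deimsn\rVert_F^2=\lVert\deimb\deimC-\deimsn\rVert_F^2-\dsc$ gives the equivalence immediately. Your route is, however, organized differently from the paper's. The paper first splits the \emph{parameters} into the sampled set (starred quantities) and the unused set (checked quantities): for the starred block it invokes the previously established identity \eqref{eq:res_star}, which rests on the optimality result \eqref{eq:min_DEIMupdate} of \Cref{prop:DEIM_update} and the SVD factor $\Vcal^*$; for the checked block it expands the norm with cross terms and uses the pseudo-inverse identity $\lVert\deimsi^\top\deimRs-\deimsi^\top\deimRs(\deimCs)^{\dagger}\deimCs\rVert_F^2=\lVert\deimsi^\top\deimRs\rVert_F^2-\lVert\deimsi^\top\deimRs\Vcal^*\rVert_F^2$, arriving at \eqref{eq:res_check}; summing the two blocks makes the $\lVert\deimsi^\top\deimRs\Vcal^*\rVert_F^2$ terms cancel. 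You bypass all of this: splitting the \emph{rows} globally, across all parameters at once, requires neither \eqref{eq:res_star}, nor the SVD of $\deimCs$, nor any property of the pseudo-inverse beyond its appearance in the formula for the update, so your argument is shorter and makes the ``if and only if'' structurally obvious. What the paper's longer route buys is interpretive content: it exhibits the split $\dsc$ into the guaranteed decrease $\lVert\deimsi^\top\deimRs\Vcal^*\rVert_F^2$ of the residual on the sampled parameters (from \eqref{eq:res_star}) and the possibly negative contribution from the unused parameters (from \eqref{eq:res_check}), which is precisely the observation motivating the greedy parameter-selection loop of \Cref{alg:DEIM_update}. Your proof establishes the criterion with less machinery but does not expose this trade-off.
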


\begin{proof}
    The residual associated with the unused $\np-\npUs$ parameters is
    \begin{align*}
        \lVert \deimbn\deimCc-\deimsnc\rVert_F^2
        & = \lVert \deimsi^\top(\deimbn\deimCc-\deimsnc)\rVert_F^2+\lVert \deimsic^\top (\deimbn\deimCc-\deimsnc)\rVert_F^2\\
        & = \lVert \deimsi^\top \deimb\deimCc-\deimsi^\top \deimRs(\deimCs)^{\dagger}\deimCc-\deimsi^\top\deimsnc\rVert_F^2+\lVert\deimsic^\top\deimRc\rVert_F^2\\
        & = \lVert \deimsi^\top\deimRc\rVert_F^2+\lVert \deimsi^\top \deimRs(\deimCs)^{\dagger}\deimCc\rVert_F^2-2(\deimsi^\top\deimRc,\deimsi^\top \deimRs(\deimCs)^{\dagger}\deimCc)_F+\lVert \deimsic^\top\deimRc\rVert_F^2\\
        & = \lVert \deimb\deimCc-\deimsnc\rVert_F^2+\lVert \deimsi^\top\deimRc-\deimsi^\top \deimRs(\deimCs)^{\dagger}\deimCc\rVert_F^2-\lVert\deimsi^\top\deimRc\rVert_F^2.
    \end{align*}
    Note that $\lVert \deimsi^\top\deimRs-\deimsi^\top \deimRs(\deimCs)^{\dagger}\deimCs\rVert_F^2=\lVert\deimsi^\top\deimRs\rVert_F^2-\lVert\deimsi^\top\deimRs\Vcal^*\rVert_F^2$, 
    so that $\lVert \deimsi^\top\deimRc-\deimsi^\top \deimRs(\deimCs)^{\dagger}\deimCc\rVert_F^2= \lVert \deimsi^\top\deimR-\deimsi^\top \deimRs(\deimCs)^{\dagger}\deimC\rVert_F^2-\lVert\deimsi^\top\deimRs\rVert_F^2+\lVert\deimsi^\top\deimRs\Vcal^*\rVert_F^2$ and
    \begin{equation}\label{eq:res_check}
        \lVert \deimbn\deimCc-\deimsnc\rVert_F^2=\lVert \deimb\deimCc-\deimsnc\rVert_F^2-\dsc+\lVert\deimsi^\top\deimRs\Vcal^*\rVert_F^2.
    \end{equation}
    The conclusion follows by summing identity \eqref{eq:res_star} with $\ast$ quantities and \eqref{eq:res_check}, which gives $\lVert\deimbn\deimC-\deimsn\rVert_F^2=\lVert\deimb\deimC-\deimsn\rVert_F^2-\dsc$.
\end{proof}
The condition $\dsc>0$ is trivially satisfied when $\npUs=\np$, since $\deimRs=\deimR$, $\deimCs=\deimC$ and $\deimC^{\dagger}\deimC=\Vcal\Vcal^\top$ so that equation \eqref{eq:delta} yields $\dsc=\lVert\deimsi^\top\deimR\Vcal\rVert_F$.
The result of \Cref{prop:delta} suggests a strategy to select both the sample indices $\deimsi$  and the $\npUs$ sample parameters simultaneously.
We propose an iterative strategy in which we add one parameter per iteration, as
summarized in \Cref{alg:DEIM_update}.
We select $\npUs$ parameters via QR factorization of $\redC$ with column pivoting and as sample indices those associated with the rows of $\deimRs\Vcal^*$ of maximum norm.
By maximizing $\lVert \deimsi^\top \deimRs\Vcal^*\rVert_F$, we have that the updated EIM residual associated to the given set of $p_U^*$ sample parameters is minimized with respect to the EIM residual obtained with the old EIM basis, as prescribed by \eqref{eq:res_star}.

Once the matrix $\deimsi$ of sample indices has been constructed, we check whether $\dsc$ is positive. 
If this is the case the algorithm stops,
otherwise a new sample parameter is added to the set, and the sample indices are recomputed.
Once the sample indices $S$ and the $\npUs$ parameters have been selected, the EIM basis is updated with the low-rank correction $-\deimsi^\top\deimRs(\deimCs)^{\dagger}$ as given in \Cref{prop:DEIM_update}. The correction is already available from the computation of $\dsc$ from the previous steps.
Note that the evaluation of $\dsc$ requires the knowledge of the EIM basis $\deimb$ and of the nonlinear matrix $\deimsn$ only at the interpolation and sample indices, which implies
that the selection algorithm is efficient when both $\nm$ and $\nms$ are much smaller than $\nd$.
%
\begin{algorithm}[H]
    \caption{Selection of sample indices and parameters and EIM update}\label{alg:DEIM_update}
    \begin{algorithmic}[1]
        \Procedure{$(\deimbn,\deimiin)$\,=\,EIM\_upd}{$\redb$, $\redC$, $\deimb$, $\deimii$, $\tau_{\nms}$}
        \State $\npUs\gets 0$,\, $\dsc\gets -1$,\, $\widetilde{\redC}\gets\redC$, $\redC^*=[\;]$
        \While {$\dsc<0$}
        \State $\npUs\gets \npUs+1$
        \State Compute one parameter $\prm_{i_{\npUs}}$ via pivoted QR of $\widetilde{\redC}$
        \State Update $\widetilde{\redC}$ by removing the column associated with the pivot
        \State Update $\redC^*=[\redC^*|Z_{i_{\npUs}}]$
        \State Compute $\deimsns=[\deimsn^{(i_1)}|\dots|\deimsn^{(i_{\npUs})}]\in\Rbb^{\nd\times2\Nrh\npUs}$ from $(\redb,\redCs)$ as in \eqref{eq:Ni}
        \State Compute $\deimCs=(\deimii^\top \deimb)^{-1}\deimii^\top \deimsns\in\Rbb^{\nm\times2\Nrh\npUs}$
        \State $[\Ucal^*,\Sigma^*,\Vcal^*]=\text{SVD}(\deimCs)$
        \State $\deimRs\Vcal^*=\deimb\Ucal^*\Sigma^*-\deimsns\Vcal^*$
        \State $\deimsi=v(v>\tau_{\nms})$ with $v=\text{vecnorm}((\deimRs\Vcal^*)^\top)\in\Rbb^\nd$
        \State Compute $\deimsi^\top \deimRs(\deimCs)^{\dagger}= \deimsi^\top \deimRs\Vcal^*{\Sigma^*}^{-1}{\Ucal^*}^\top$
        \State $\dsc\gets\lVert \deimsi^\top \deimR\rVert_F^2-\lVert \deimsi^\top \deimR-\deimsi^\top \deimRs(\deimCs)^{\dagger}\deimC\rVert_F^2$
        \EndWhile
        \State $\deimsi^\top \deimbn\gets \deimsi^\top \deimb-\deimsi^\top \deimRs(\deimCs)^{\dagger}$ and
        $\deimsic^\top \deimbn\gets \deimsic^\top \deimb$
        \State Orthogonalize $\deimbn$
        \State $\deimiin\gets \textsc{QDEIM}(\deimbn)$
        \EndProcedure
    \end{algorithmic}
\end{algorithm}
The arithmetic complexity of \Cref{alg:DEIM_update} is $O(\Nfh\npUs\Nrh)+O(\nd\npUs\Nrh\nrc)+O(\nd\nm^2)+O(\np\npUs\Nrh)+O(\np\Nrh\nm\nms)$.
Here, we assume that the number of non-zero entries in each row of $\JGdec$ is $\nnzJ\ll\nd$ and that $\nnzJ\leq\nrc\leq\nm$. Recall that $\nm\leq\nms\leq \nd$, so that we also have $\nnzJ\leq\nms$.
A detailed analysis of the costs is as follows.
The pivoted QR of $\redC\in\Rbb^{2\Nrh\times\np}$ truncated at $\npUs$ pivots has complexity $O(\np\npUs\Nrh)$.
Next, to compute the nonlinear matrix $\deimsns$ defined in \eqref{eq:Ni}, the reduced Jacobian $\JGdec(\redb\redCs)\redb$ has to be evaluated at the columns of the matrix $\redC$ associated with $\npUs$ sample parameters:
$O(\Nfh\npUs\Nrh)$ operations are required to reconstruct the reduced order solution at the $\npUs$ sample parameters and
$O(\nd\npUs\Nrh\nnzJ)$ operations are needed for the multiplication of the Jacobi matrix with the reduced basis $\redb$.
%
%
The computation of $\deimCs$ and its SVD require $O(\nm^3)+O(\nm^2\Nrh\npUs)$ and
$O(\nm\Nrh\npUs\min{(\nm,2\Nrh\npUs)})$ operations, respectively.
%
The matrix-matrix multiplications involved in the construction of $\deimRs\Vcal^*$ have complexity $O(\nd\nm\nrc)+O(\nd\Nrh\nrc\npUs)$.
%
For the evaluation of $\dsc$ defined in \eqref{eq:delta}, 
%
we compute $\deimsi^\top \deimRs\Vcal^*(\Sigma^*)^{-1}(\Ucal^*)^\top \deimC$ with a complexity of
$O(\nms\nrc\nm)$, while
%
$\deimsi^\top \deimR$ can be computed in $O(\nms\nm^2)+O(\nms\nm\Nrh\np)$ operations.
Then, at line 12, the multiplication by the matrix $\deimC$ has complexity $O(\np\Nrh\nm\nms)$.
%
Finally, the update is performed at line 14 (complexity $O(\nms\nm)$), the columns of the resulting matrix are re-orthogonalized ($O(\nd\nm^2)$) and the EIM interpolation indices are recomputed (also $O(\nd\nm^2)$ with QDEIM).

We point out that, whenever the solution of the problem is not localized in space, the number $\nms$ of sampling points for the EIM basis update might become proportional to $\nd$, as shown in the numerical experiment of \Cref{sec:exp_nonlocalized}. In such situation, one might take $\npUs=\npAs\geq 2\Nrh$ parameters for the EIM update via pivoted QR of $Z$ without explicitly forming $\delta$. Although it has been numerically observed that this choice provides a sufficient condition for the positivity of $\delta$, the greedy strategy in \Cref{alg:DEIM_update} provides the minimum number $\npUs$ of parameters to have $\delta>0$ and it is still computationally efficient even when $\nms$ is close to $\nd$ (see \Cref{fig:nonlocalized_times_err_ms}). We postpone further discussion to \Cref{sec:numexp}.


\section{Adaptivity of the approximation rank and of the EIM space dimension}\label{sec:rank-adaptivity}
The accuracy of the approximation introduced by model order reduction and hyper-reduction and the efficiency of the corresponding algorithms are determined by the dimension of the reduced basis space and of the EIM space.
%
%
It is indeed desirable that both dimensions are significantly smaller than that of the full order model to ensure a computational gain. On the other hand, the choice of the reduced space dimension should be strictly related to the numerical rank of the full solution: a too small reduced space might yield an inaccurate representation of the full order dynamics, while a too large value might lead to over-approximation and little or no runtime gain.
Moreover, the numerical rank of the full order solution is likely to change over time depending on the variability of the solution as a function of the parameter and on the nonlinearity of the dynamics.

We propose an algorithm to update the dimension of both the reduced space and of the EIM space at the beginning of each temporal sub-interval introduced by the temporal discretization of the dynamics.
Let us consider the splitting of the time domain $\Tcal$ into the
union of intervals $\Tcalt:=(t^{\tind},t^{\tind+1}]$, $\tind=0,\ldots,\nt-1$, with $t^{\nt}:=T$, and let the local time step be defined as $\dt_{\tind}=t^{\tind+1}-t^{\tind}$ for every $\tind$.
We assume that the dimensions of the reduced and EIM space can change only at the discrete time instants $\{t^{\tind}\}_{\tind=1}^{\nt-1}$. In each temporal subinterval $\Tcalt$, the dimension of the reduced space is denoted by $\Nr$ while the dimension of the EIM space is $\nmj$, the approximate reduced state at time $t^{\tind}$ is $\rsR_{\tind}$ and its low-rank factors are
$\redb_{\tind}$ and $\redC_{\tind}$.

\subsection{Rank adaptivity}

At the initial time, the rank $\Nrz$ of the approximate state $\rsR_0$ is determined in such a way that $\rsR_0$ approximates the exact initial condition with a chosen accuracy.
As time evolves, an error indicator is used to check the quality of the current approximation and, based on a certain criterion, the reduced basis space is enlarged or reduced.
The choice of a suitable error indicator is a crucial feature of the rank adaptivity algorithm: it should provide an accurate indication of the current approximation error with respect to the full order solution and, at the same time, it should be computed at a relatively low computational cost.

Rank-adaptation has received considerable attention in recent years. A family of works proposes adaptive techniques based on the spectral properties of the approximate solution: a recompression of the approximate tensor decomposition of the solution is proposed in \cite{EL17} for the numerical solution of the Vlasov-Poisson equation; in \cite{CKL22} the rank of the approximation is doubled at each time step based on past information of the solution and then truncated SVD is applied; in \cite{HNS23} an additional singular value is propagated in time and used as an indicator to select the rank of the solution at the new time step. The aforementioned methods are relatively cheap in terms of computational time but do not provide any indication on the error with respect to the full order solution. Moreover, they do not consider parameter-dependent problems and cannot ensure that the geometric structure of the problems at hand is preserved.
A rank-adaptive, structure-preserving reduced basis method was introduced in \cite{HPR22}, with an error indicator based on the linearized residual of the full order model. Although this quantity provides an accurate estimate of the error between the reduced and the full order solution, its computation requires to solve a linear system of dimension $\Nf$ for all test parameters.
In many practical situations this is simply not affordable.
%
Another family of works \cite{CL23,GA22} proposes to consider the angle between the velocity of the flow, the Hamiltonian vector field $\hvf=J_{\Nf}\nabla\Hcal$ in our case, and its projection onto the tangent space to $\Mn$. The rationale behind this choice is that, if the reduced space is able to accurately capture the full dynamics, then the full order Hamiltonian vector field is ``almost parallel" to its projection.
With our notation, the angle at $\rsR\in\Mn$ is defined as
\begin{equation}\label{eq:theta_errind}
    \indtheta(\rsR) = \arccos\left(\frac{\lVert\Pi_{\TM{\rsR}}\hvf(\rsR)\rVert_F}{\lVert\hvf(\rsR)\rVert_F}\right),
\end{equation}
and we let $\indtheta_{\tind}:=\indtheta(\rsR_{\tind})$ denote the value obtained by replacing the approximate state $\rsR$ with its time discretization at time $t^{\tind}$.
This quantity can be evaluated at a relatively low computational cost since $\Pi_{\TM{\rsR_{\tind}}}\hvf(\rsR_\tind)=\Adot(\redb_\tind,\redC_\tind)\redC_\tind+\redb_\tind\redb_\tind^\top \hvf(\rsR_\tind)$
where $\rsR_\tind=\redb_\tind\redC_\tind$ and $\Adot(\redb,\redC)$ denotes the right-hand side of \eqref{eq:redb}; thereby $\lVert\Pi_{\TM{\rsR_{\tind}}}\hvf(\rsR_\tind)\rVert_F^2=\lVert\Adot(\redb_\tind,\redC_\tind)\redC_\tind\rVert_F^2+\lVert\redb_\tind^\top \hvf(\rsR_\tind)\rVert_F^2$.
Since the quantities involved are already available from the solution of the reduced dynamics \eqref{eq:ROM}, the angle \eqref{eq:theta_errind} can be computed in $O(\Nfh\np\nr)$ operations. Although this might still be expensive when both $\np$ and $\Nfh$ are large, one may evaluate the angle based on a subset of $\npAs$ parameters rather than on all $\np$ parameters, with a computational complexity of $O(\Nfh\npAs\nr)$. We will discuss this option in \Cref{sec:pAstar_selection}. 
Moreover, the angle \eqref{eq:theta_errind} provides an indicator related to the error between the approximate and full order solution, see \Cref{prop:errind}. However, such error indicator shows some limitations at least in some numerical tests (see \Cref{sec:numexp}). Indeed, using an angle to determine when the approximation space needs to be changed does not appear to be a robust measure of the approximation error, hence making it difficult to set a criterion/tolerance
to determine when the approximation has deteriorated.
This is further exacerbated by the fact that this approach does not take into account the time evolution of the approximate solution, but only its value at a given time instant.

To overcome the limitations of the available rank-adaptive algorithms, we propose an error indicator, related to the projection error of the Hamiltonian, but
 defined~as
\begin{equation}\label{eq:residual_errind}
    \indres_\tind:=\sum_{\ell=\overline{\tind}}^{\tind}(t^{\ell+1}-t^\ell)\widetilde{\indres}_{\ell}\,,
\end{equation}
where $\widetilde{\indres}_{\ell}:=\widetilde{\indres}(\rsR_{\ell})$, $t^{\overline{\tind}}$ is the time when the last update was performed, and
\begin{equation}\label{eq:projerr_hvf}
    \widetilde{\indres}(\rsR):=\lVert\hvf(\rsR)-\Pi_{\TM{\rsR}}\hvf(\rsR)\rVert_F,\qquad\forall\, \rsR\in\Mn.
\end{equation}
The quantity \eqref{eq:residual_errind} approximates the time integral of the projection error of the Hamiltonian vector field between $t^{\overline{\tind}}$ and $t^{\tind}$, thus, unlike $\indtheta_\tind$, taking into account the evolution of the approximation error. The complexity of computing $\widetilde{r}_{\ell}$ at a given time step is again $O(\Nfh\np\nr)$, so that the same considerations made for $\indtheta_\tind$ also apply here. 
Moreover, although \eqref{eq:residual_errind} does not provide a direct indication of the approximation error introduced by dimensionality reduction, the following result holds.
\begin{proposition}\label{prop:errind}
    Let $\overline{t}\geq t_0$ be fixed and let $E(t)$ denote the error between the (hyper-)reduced and full order solutions, i.e.,
    $E(t):=\fsR(t)-\rsR(t)$ for any $t>\overline{t}$. Let the error indicators $\indtheta$ and $\widetilde{\indres}$ be defined as in \eqref{eq:theta_errind} and \eqref{eq:projerr_hvf}, respectively. Then
    \begin{align*}
        \lVert E(t)\rVert_F
        &\leq\lVert E(\overline{t})\rVert_Fe^{L(t-\overline{t})}+\int_{\overline{t}}^t\widetilde{\indres}(\rsR(\tau))e^{L(t-\tau)}\,d\tau\\
        &=\lVert E(\overline{t})\rVert_Fe^{L(t-\overline{t})}+\int_{\overline{t}}^t\lVert\nabla\Hcal(\rsR(\tau))\rVert_F\sin(\indtheta(\rsR(\tau)))e^{L(t-\tau)}\,d\tau,
    \end{align*}
    where $L=\big(\sum_{k=1}^\np\ell^2_k\big)^{1/2}$ and $\ell_k$ is the Lipschitz continuity constant of the Hamiltonian vector field $\hvfk$ associated with the $k$th parameter.
\end{proposition}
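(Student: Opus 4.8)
The plan is to prove this as a standard Gronwall-type error estimate for the difference between the full order trajectory and the (hyper-)reduced trajectory, where the ``forcing'' term in the differential inequality is precisely the projection error $\widetilde{\indres}$. First I would write down the evolution equation satisfied by the error $E(t)=\fsR(t)-\rsR(t)$. Using \eqref{eq:FOM} for $\fsR$ and \eqref{eq:red} for $\rsR$, the time derivative of $E$ is
\begin{equation*}
\dot{E}(t)=\hvf(\fsR(t))-\Pi_{\TM{\rsR(t)}}\hvf(\rsR(t)).
\end{equation*}
The key algebraic step is to add and subtract $\hvf(\rsR(t))$ so as to split this into a term controlled by Lipschitz continuity and a term that is exactly the projection error. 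Writing
\begin{equation*}
\dot{E}(t)=\big(\hvf(\fsR(t))-\hvf(\rsR(t))\big)+\big(\hvf(\rsR(t))-\Pi_{\TM{\rsR(t)}}\hvf(\rsR(t))\big),
\end{equation*}
the second parenthesis has Frobenius norm equal to $\widetilde{\indres}(\rsR(t))$ by definition \eqref{eq:projerr_hvf}.

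Next I would bound the norm of the first term. Since $\hvf(\fsR)-\hvf(\rsR)$ is the matrix whose $k$th column is $\hvfk(\fsR_k)-\hvfk(\rsR_k)$, and each column is Lipschitz with constant $\ell_k$ uniformly in time (the assumption stated after \eqref{eq:HamSystem}), I would estimate the Frobenius norm column-by-column:
\begin{equation*}
\lVert\hvf(\fsR)-\hvf(\rsR)\rVert_F^2=\sum_{k=1}^\np\lVert\hvfk(\fsR_k)-\hvfk(\rsR_k)\rVert^2\leq\sum_{k=1}^\np\ell_k^2\lVert \fsR_k-\rsR_k\rVert^2.
\end{equation*}
To collapse this into a single Lipschitz-type bound $L\lVert E\rVert_F$ with $L=(\sum_k\ell_k^2)^{1/2}$, I would apply the Cauchy--Schwarz inequality to the sum, bounding each $\lVert \fsR_k-\rsR_k\rVert^2$ by $\lVert E\rVert_F^2$ is too crude; instead the clean route is $\sum_k\ell_k^2\lVert E_k\rVert^2\leq(\max_k\ell_k^2)\lVert E\rVert_F^2$ or, to match the stated constant exactly, to use $\sum_k\ell_k^2\lVert E_k\rVert^2\leq(\sum_k\ell_k^2)(\max_k\lVert E_k\rVert^2)$ — I expect this collapsing step to be the main technical subtlety, since getting precisely $L=(\sum_k\ell_k^2)^{1/2}$ as the constant multiplying $\lVert E\rVert_F$ requires a careful Cauchy--Schwarz pairing of the vectors $(\ell_k)_k$ and $(\lVert E_k\rVert)_k$ rather than a term-by-term bound.

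With the differential inequality $\tfrac{d}{dt}\lVert E(t)\rVert_F\leq L\lVert E(t)\rVert_F+\widetilde{\indres}(\rsR(t))$ in hand (using $\tfrac{d}{dt}\lVert E\rVert_F\leq\lVert\dot{E}\rVert_F$ via the triangle inequality together with the splitting above), I would apply the integral form of Gronwall's inequality on $[\overline{t},t]$ to obtain
\begin{equation*}
\lVert E(t)\rVert_F\leq\lVert E(\overline{t})\rVert_F e^{L(t-\overline{t})}+\int_{\overline{t}}^t\widetilde{\indres}(\rsR(\tau))e^{L(t-\tau)}\,d\tau,
\end{equation*}
which is the first displayed bound. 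Finally, to recover the second equality I would invoke the decomposition $\hvf=\Pi_{\TM{\rsR}}\hvf+(\hvf-\Pi_{\TM{\rsR}}\hvf)$ as an orthogonal splitting with respect to the symplectic projection, so that $\widetilde{\indres}(\rsR)=\lVert\hvf(\rsR)\rVert_F\sin(\indtheta(\rsR))$ follows directly from the definition \eqref{eq:theta_errind} of the angle and the Pythagorean relation $\lVert\hvf\rVert_F^2=\lVert\Pi\hvf\rVert_F^2+\lVert\hvf-\Pi\hvf\rVert_F^2$, together with $\lVert\hvf(\rsR)\rVert_F=\lVert\nabla\Hcal(\rsR)\rVert_F$ since $J_{\Nf}$ is orthogonal. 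Substituting this identity into the integrand yields the stated second expression.
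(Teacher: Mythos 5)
Your proposal is correct and follows essentially the same route as the paper's proof: the same splitting of $\dot{E}$ by adding and subtracting $\hvf(\rsR)$, the same differential inequality $\tfrac{d}{dt}\lVert E\rVert_F\leq L\lVert E\rVert_F+\widetilde{\indres}(\rsR)$, Gr\"onwall's lemma, and the identity $\widetilde{\indres}(\rsR)=\lVert\nabla\Hcal(\rsR)\rVert_F\sin(\indtheta(\rsR))$ from the orthogonality (in the Frobenius inner product) of the tangent-space projection. The one step you flag as ``the main technical subtlety'' is in fact a non-issue: the bound you dismiss as too crude, namely $\lVert E_k\rVert^2\leq\lVert E\rVert_F^2$ applied term by term, gives $\sum_k\ell_k^2\lVert E_k\rVert^2\leq\big(\sum_k\ell_k^2\big)\lVert E\rVert_F^2=L^2\lVert E\rVert_F^2$, which is exactly the stated constant, so no careful Cauchy--Schwarz pairing is required.
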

\begin{proof}
    The evolution of the error can be written as
    \begin{equation*}
        \Dot{E}=\Dot{\fsR}-\Dot{\rsR}=\hvf(\fsR(t))-\hvf(\rsR(t))+\hvf(\rsR(t))-\Pi_{\TM{\rsR}}\hvf(\rsR(t)).
    \end{equation*}
    Since $d_t\norm{E(t)}_F\leq\lVert\dot{E}(t)\rVert_F$, it holds $d_t\norm{E(t)}_F  \leq L\lVert E(t)\rVert_F+\widetilde{\indres}(\rsR(t))$
    and the result follows from Gr\"onwall's lemma \cite{Gro19}. Moreover, it can be easily shown that $\widetilde{\indres}(\rsR)=\norm{\nabla\Hcal(\rsR)}_F\sin{(\theta(\rsR))}$, because $\Pi_{\TM{\rsR}}$ is an orthogonal projection with respect to the Frobenius norm.
\end{proof}
This result shows that both $\indtheta_\tind$ and $\indres_\tind$ provide approximations to an upper bound of the error.
Therefore, we suggest the following procedure for rank adaptation. Assume that the last rank update has been performed at time $t^{\overline{\tind}}$ and $\overline{\inde}:=\inde_{\overline{\tind}}$ is the value of the error indicator at the last update, where $\inde$ denotes either $\indtheta$ or $\indres$.
As suggested in \cite{HPR22}, the rank update is performed when
\begin{equation}\label{eq:criterion}
\inde_\tind\geq \overline{\inde}K(\ladapt),\quad \mbox{where }\quad K(\ladapt):=\radapt\cadapt^{\ladapt},\quad \radapt,\cadapt\in\Rbb_+,\, \ladapt\in\mathbb{N}.
\end{equation}
In case this criterion is satisfied, the rank of the reduced solution is increased: the reduced basis is augmented with the space directions in which the projection error $E_P(\rsR_{\tind}):=\hvf(\rsR_\tind)-\Pi_{\TM{\rsR}}\hvf(\rsR_\tind)$ is currently worst approximated. More in details, we compute the singular value decomposition of $E_P(\rsR_{\tind})$ and add to the reduced basis the $\nrnew-\nr$ dominant singular vectors together with their symplectic duals.
The new vector of coefficients is defined as the projection of the old coefficients onto the new reduced basis: $\redC_\tind^{\new}=(\redb_\tind^{\new})^\top\redb_\tind\redC_\tind$. The number of new basis vectors to be added to the old reduced basis can either be fixed (e.g., by adding one pair $(v,J_{\Nf}^{\top}v)$ of basis vectors at each adaptation) or determined based on a suitable tolerance.

It is important to note that the matrix $S$ is ill-conditioned after the rank update, so that numerical inaccuracies may arise when solving the evolution equation for the reduced basis. For this reason, a suitable regularization needs to be performed before inverting $S$, as explained in \cite[Section 4]{HPR22}.
We point out that decreasing the size of the reduced basis space is a simpler task since one can look at the spectrum of the reduced state and remove the modes associated with the singular values smaller than a certain value.

\subsection{Adaptation of the dimension of the EIM space}
To initialize the EIM pair $(\deimb_0,\deimii_0)$, we collect instances of the reduced Jacobian at the initial time in the matrix $\deimsn_0:=\JGdec(\redb_0\redC_0)\redb_0\in\Rbb^{\nd\times 2\Nrz\np}$, assuming that $\deimsn_0\neq 0$.
If this is not the case, the reduced system \eqref{eq:ROM} is solved without hyper-reduction for $\deimf_0$ time steps, and $t^{\deimf_0}$ is then taken as initial time for the adaptive EIM strategy.
The initial EIM basis $\deimb_0\in\Rbb^{\nd\times\nmz}$ is computed via truncated SVD of $\deimsn_0$ to achieve a certain accuracy, and the interpolation points $\deimii_0$ are obtained from $\deimb_0$. 
The EIM basis is then adapted every $\deimf\geq1$ time steps as described in \Cref{sec:adaptEIM}.
Although the EIM basis is in principle adapted at each time, we propose to update its dimension only when the rank of the reduced solution is updated.
The rationale is that hyper-reduction is performed on the nonlinear Jacobian mapped to the reduced space: as the reduced space gets larger, it has been observed that the rank of the nonlinear operator increases as well \cite[Figure 4]{PV22}.
More in details, whenever the rank of the approximate state changes,
the EIM basis is re-computed via truncated SVD, up to a given tolerance $\tau_{\nm}$, of the matrix of the reduced Jacobian evaluated at the reduced solution $\redb^{\new}\redC^{\new}$ after the rank update.
In principle one can use subsampling techniques and perform the update based on the matrix $\redCs$ collecting a subset of the columns of $\redC$; the construction of $\redCs$ will be discussed in \Cref{sec:pAstar_selection}.
The rank-adaptive procedure is summarized in \Cref{alg:rank-update}.

\begin{algorithm}[H]
    \caption{Rank update at fixed time $t=t^{\tind}$}\label{alg:rank-update}
    \begin{algorithmic}[1]
        \Procedure{$(\redb^{\new},\redC^{\new},\deimb,\deimii)$=Rank\_update}{$\redb,\redC,\redC^*,\tau_{\nm}$}
        \State Form $\redb^{\new}$ by increasing $\redb$ with the first $\nrnew-\nr$ singular vectors of $E_P(\redb\redC^*)$
        \State $\redC^{\new}\gets(\redb^{\new})^\top\redb\redC$
        \State Compute $\deimb$ via truncated SVD of $\deimsn(\redb^{\new},(\redC^{\new})^*)$ \eqref{eq:Ni} with tolerance $\tau_{\nm}$
        \State $\deimii\gets \textsc{QDEIM}(\deimb)$
        \EndProcedure
    \end{algorithmic}
\end{algorithm}


\section{Parameters sampling for adaptivity}
\label{sec:pAstar_selection}

Even if the size of the reduced basis space satisfies $\Nrh(t)\ll\Nfh$ for all $t\in\Tcal$, the evolution of the reduced basis and the computation of the error indicator require evaluating the full order Hamiltonian vector field $\hvf$ at the reduced solution for all test parameters. The computational complexity of these operations scales with the product $\Nfh\np$. To mitigate this computational burden, we propose to select a subset of $\npAs<\np$ sample parameters ${\vprm}^*=\{\prm_{i_1},\dots,\prm_{i_{\npAs}}\}$
at each time $t^{\tind}$ and to consider the submatrix $\redCs_{\tind}$ containing the columns of the approximate coefficients $\redC_{\tind}$ of indices $\{i_1,\ldots,i_{\npAs}\}$ in the rank update \Cref{alg:rank-update}, and in the evolution of the reduced basis \eqref{eq:redb}.
In this way, all operations with arithmetic complexity $O(\Nfh\np)$ have complexity $O(\Nfh\npAs)$.

To select the sample parameters, we propose to compute the QR factorization of $\redC_\tind$ with column pivoting. The idea is that the reduced space is evolved by only considering the parameters that are ``most relevant" to the dynamics. Typically, we would like to have $\npAs$ much smaller than the number of test parameters $\np$, but we require that $\npAs\geq\nr$,
with $\Nr$ the dimension of the current reduced basis space.
This is a necessary condition 
to satisfy the rank condition $\rank(\redCs_\tind{\redCs_\tind}^\top+J_{\Nr}^\top \redCs_\tind{\redCs_\tind}^\top J_{\Nr})=\Nr$.


With the proposed algorithm, the hyper-reduced model
in each $\Tcalt$ reads
\begin{subequations}\label{eq:hROM}
\begin{empheq}[left = \empheqlbrace\,]{align}
& \dot{\redb}  = (I_{\Nf}-\redb\redb^\top)\big(J_{\Nf}\nabla\Hcal(\redb\redCs)(\redCs)^\top{+}\nabla\Hcal(\redb\redCs)(\redCs)^\top J_{\Nr}\big)M^{-1}(\redCs), 
\label{eq:redbs}\\ 
& \dot{\redC}  = J_{\Nr}\nabla\hrham_\tind(\redC), 
\label{eq:redCs}
\end{empheq}
\end{subequations}
where $\hrham_\tind$ is the hyper-reduced Hamiltonian (see \Cref{sec:adap_GPEIM}) with the subscript $\tind$ referring to the EIM approximation in the current temporal sub-interval $\Tcalt$.
Note that the approximate reduced model \eqref{eq:hROM} retains the geometric structure of the full order model.

\section{Temporal discretization of the hyper-reduced model}\label{sec:temp_discr_hypreduced}

For the numerical time integration of the hyper-reduced model \eqref{eq:hROM},
we adopt the temporal integrator proposed in \cite{P19}.
The idea is to combine a symplectic temporal integrator for the evolution \eqref{eq:redCs} of the expansion coefficient with a time discretization of the basis evolution \eqref{eq:redbs} able to preserve the ortho-symplectic constraint.

In each temporal sub-interval $\Tcalt=(t^{\tind},t^{\tind+1}]$, given the approximate reduced basis $\redb_{\tind}$,
the method constructs a local retraction $\retr_{\redb_{\tind}}$ so that $\redb(t)=\retr_{\redb_{\tind}}(\Vretr(t))$ for some $\Vretr$ in the tangent space at $\redb_{\tind}$ that is evolved in time.
Given $\redC_{\tind}$, the problem reads
\begin{equation}\label{eq:hROMtang}
    \left\{\begin{aligned}
       & \Dot{\Vretr}(t)=f_\tind(\Vretr(t),\redCs(t)), \\
       & \Dot{\redC}(t)=g^{\hr}_\tind(\Vretr(t),\redC(t),\deimb_\tind,\deimii_\tind),
    \end{aligned}\right.
\end{equation}
where $\Vretr(t^\tind)=0$, $f_\tind$ is defined as
$f_\tind=\big(d\retr_{{\redb_\tind}_|{_{\Vretr(t)}}}\big)^{-1}\Adot(\retr_{\redb_\tind}(\Vretr(t)),\redC(t))$ via the inverse tangent map of the retraction and 
\begin{equation*}
    g^{\hr}_\tind(\Vretr(t),\redC(t),\deimb_\tind,\deimii_\tind):=J_{\Nr}\nabla\hrham_\tind\big(\retr_{\redb_\tind}(\Vretr(t))\redC(t)\big).
\end{equation*}
Once $\Vretr_{\tind+1}$ has been computed, the reduced basis is obtained as $\redb_{\tind+1}=\retr_{\redb_\tind}(\Vretr_{\tind+1})$.

Since $\deimb_\tind$ and $\deimii_\tind$ are fixed, system \eqref{eq:hROMtang} can be solved using the 2-stage second order partitioned RK scheme introduced in \cite[Lemma A.1]{HPR22}, where
the equation for $V$ is solved using the explicit midpoint scheme and the equation for $\redC$ using the implicit midpoint rule. The application of the method to the hyper-reduced model \eqref{eq:hROM} is presented in \Cref{alg:partRKhrom}.
The initial reduced basis $\redb_0$ is obtained via complex SVD of the initial condition $\fsR_0$, while the initial coefficient matrix is given by the projection of the initial condition onto the reduced space, that is, $\redC_0:=\fsR_0-\redb_0\redb_0^\top\fsR_0$.
We refer to \cite{P19,HPR22} for further details on this temporal integrator. 

\begin{algorithm}[H]
    \caption{Partitioned RK2 scheme for the hyper-reduced model \eqref{eq:hROM}}\label{alg:partRKhrom}
    \begin{algorithmic}[1]
        \Procedure{$(\redb_{\tind+1},\redC_{\tind+1})$=\textsc{pRK2\_HROM}}{$\redb_\tind,\redC_\tind,\redC^*_\tind,\deimb_\tind,\deimii_\tind$}
        \State $\hki^*=\Adot(\redb_\tind,\redC^*_\tind)$        
        \State $\kii=g^{\hr}_\tind\left(\frac{\dt}{2}\hki^*,\redC_\tind+\frac{\dt}{2}\kii,\deimb_\tind,\deimii_\tind\right)$
        \State Restrict $\kii$ to the sample parameters: $\kii^*:=g^{\hr}_\tind\left(\frac{\dt}{2}\hki^*,\redC^*_\tind+\frac{\dt}{2}\kii^*,\deimb_\tind,\deimii_\tind\right)$
        \State $\hkii=f_\tind\left(\frac{\dt}{2}\hki^*,\redC^*_\tind+\frac{\dt}{2}k^*_2\right)$
        \State $\Vretr_{\tind+1}\gets\dt_{\tind}\,\hkii$ and $\redb_{\tind+1}\gets\retr_{\redb_\tind}(\Vretr_{\tind+1})$
        \State $\redC_{\tind+1}\gets\redC_\tind+\dt_{\tind}\,\kii$
        \EndProcedure
    \end{algorithmic}
\end{algorithm}


\section{Summary of the algorithm}\label{sec:summary}

The main steps of the proposed structure-preserving adaptive hyper-reduction method for the solution of the parametric system \eqref{eq:FOM} are reported in \Cref{alg:final-adaptive-alg}, whose computational complexity is detailed below. We report the highest possible cost for operations such as the solution of a linear system, matrix-matrix multiplication, etc. These operations can be clearly accelerated in situations where the problem has particular structures and sparsity patterns.


\begin{algorithm}
    \caption{Structure-preserving adaptive hyper-reduction}\label{alg:final-adaptive-alg}
    \begin{algorithmic}[1]
        \Procedure{SPA-hyper}{$\fsR_0$, $K$, $\nt$, $\tau_{\nmz}$}
        \State Compute $\redb_0$ via complex truncated SVD of $\fsR_0$
        \State $\redC_0\gets\redb_0^\top\fsR_0$
        \State $\ladapt\gets0$
        \State Compute $\deimb_0$ via truncated SVD of $\JGdec(\redb_0\redC_0)\redb_0$ with tolerance $\tau_{\nm}$
        \State $\deimii_0\gets\textsc{QDEIM}(\deimb_0)$
        \For {$\tind=0,\dots,\nt-1$}
        \State Form $\vprm^*_{\tind}$ by selecting $\npAs$ parameters via pivoted QR of $\redC_{\tind}$
        \State Evaluate the error indicator $\inde^*_{\tind}$ from $\rsR_{\tind}^*=\redb_{\tind}\redC_{\tind}^*$
        \If{$\tind=0$}
        \State $\overline{\inde}\gets\inde^*_{\tind}$
        \ElsIf{$\tind>0$ and $\inde^*_\tind>\overline{\inde}K(\ladapt)$}
        \State $(\redb_{\tind},\redC_{\tind},\deimb_\tind,\deimii_\tind)=\textsc{Rank\_update}(\redb_\tind,\redC_\tind,\redC_{\tind}^*,\tau_{\nm})$ as in \Cref{alg:rank-update}
        \State $\overline{\inde}\gets \inde^*_\tind$, \;$\ladapt\gets\ladapt+1$
        \State  Form $\redC_{\tind}^*$ with the columns of $\redC_{\tind}$ associated with the parameters $\vprm^*_{\tind}$
        \EndIf
        \State $(\redb_{\tind+1},\redC_{\tind+1})=\textsc{pRK2\_HROM}(\redb_\tind,\redC_\tind,\redC^*_\tind,\deimb_\tind,\deimii_\tind)$ as in \Cref{alg:partRKhrom}
        \State $(\deimb_{\tind+1},\deimii_{\tind+1})=\textsc{EIM\_upd}(\redb_{\tind+1},\redC_{\tind+1},\deimb_\tind,\deimii_\tind,\tau_{\nms})$ as in \Cref{alg:DEIM_update}
        \EndFor
        \EndProcedure
    \end{algorithmic}
\end{algorithm}

The computation of the initial data (lines 2-6) requires $O(\Nfh\np\Nrh_0)+O(\np\nd\nmz\Nrh_0)+O(\nd\nmz^2)$ operations.
Then, at each time step, \Cref{alg:final-adaptive-alg} incurs the following costs:
\begin{itemize}
    \item $O(\np\npAs\nr)$ for the pivoted QR of $\redC_{\tind}$, and
    $O(\Nfh\npAs\nr)$ for the evaluation of the error indicator $\inde^*_{\tind}$.
    Note that, in practice, we use the information on the parameters computed with \Cref{alg:DEIM_update} at line 18, whenever available.
    \item $O(\Nfh\nrp{2})+O(\Nfh\npAs\nr)+O(\np\nrp{3}\nINL)+O(\np\nr\nnzJ\nm\nINL)$ for \Cref{alg:partRKhrom}, where $\nINL$ is the number of iterations of the nonlinear solver. More in details, the evaluation of the velocity $\Adot$ at $\npAs$ sample parameters has leading complexity $O(\Nfh\nrp{2})+O(\Nfh\npAs\nr)$ \cite[Section 6]{HPR22}.
    Next, $\nabla\Hcal_\tind^{\hr}(A,Z)$ can be computed with $O(\nm^3)+O(\Nfh\nm)+O(\nm\nnzJ\nr)$ operations. Since $\nnzJ$ is also the number of non-zero entries in each row of $\nabla^2\Hcal_\tind^{\hr}$, the hyper-reduced Hessian can be evaluated with $O(\nm\nnzJ\nr)$ operations.
    %
    For the computation of $g^{\hr}_\tind$ a nonlinear system has to be solved for each of the $\np$ columns of $\kii$. If Newton's method is employed, each iteration consists of evaluating $\nabla^2\Hcal_\tind^{\hr}$ and solving a $\Nr\times\Nr$ system for all test parameters, with complexity $O(\np\nr\nnzJ\nm\nINL)+O(\np\nrp{3}\nINL)$. Finally, with the algorithm \cite[Section 5.3.1]{P19}, the computation of the retraction $\retr_{\redb_\tind}$, its inverse tangent map and the assembly of the operator $f_{\tind}$ have arithmetic complexity $O(\Nfh\nrp{2})$.
    \item $O(\Nfh\npUs\nr)+O(\nd\npUs\nr\nrc)+O(\nd\nm^2)+O(\np\npUs\nr)+O(\np\nr\nm\nms)$ for \Cref{alg:DEIM_update} as derived in \Cref{sec:samples}.
\end{itemize}
In addition, at each rank update, \Cref{alg:rank-update} is applied with arithmetic complexity $O(\Nfh(\npAs)^2)+O(\nd\nrnew\npAs\nnzJ)+O(\nd\nrnew\npAs\nm)+O(\nd\nm^2)$,
resulting from the SVD of the projection error $E_P(\fsR^*)\in\Rbb^{\Nf\times\npAs}$, the assembly of $\deimsn(\redb^{\new},(\redC^{\new})^*)$ and the computation of its truncated SVD with tolerance $\tau_\nm$, and the computation of the new DEIM interpolation indices.
Note that the matrix-matrix multiplications involved in the definition of $\redC^{\new}$ can be avoided
by augmenting $\redC$ with $\nrnew-\nr$ rows of zeros.

A reduction in the computational runtimes of the algorithm compared to the solution of the reduced dynamics \eqref{eq:ROM} is thus achieved by decoupling the operations that depend on the dimension of the full order model from those that depend on the number of parameters.


\section{Numerical experiments}\label{sec:numexp}
In the two-dimensional, rectangular domain $\Omega=[-L_\hc,L_\hc]\times[-L_\vc,L_\vc]\subset\Rbb^2$, we consider the nonlinear Schr\"odinger equation
\begin{equation}\label{eq:schrodinger}
        \imath\partial_t u+\Delta u+\prme\lvert u\rvert^2u=0 \qquad \text{ in } \Omega\times\mathcal{T}, 
\end{equation}
with periodic boundary conditions and initial condition $u(0,\hc,\vc;\prmaic,\prmbic)=u^0(\hc,\vc;\prmaic,\prmbic)$.
The solution depends on the parameter vector $\prm=(\prmaic,\prmbic,\prme)\in\prms\subset\Rbb^3$, where $\prme$ enters the equation, while $\prmaic$ and $\prmbic$ enter the initial condition. The Hamiltonian formulation of this problem is obtained by writing $u(t,\hc,\vc;\prm)=q(t,\hc,\vc;\prm)+\imath p(t,\hc,\vc;\prm)$ and deriving evolution equations for the real and imaginary parts, $q$ and $p$ respectively:
\begin{equation}\label{eq:NLSfom}
    \left\{\begin{aligned}
        &\partial_t q=-\Delta p-\prme(q^2+p^2)p & \text{ in } \Omega\times\Tcal, \\
        &\partial_t p=\Delta q+\prme(q^2+p^2)q & \text{ in } \Omega\times\Tcal. 
    \end{aligned}\right.
\end{equation}
This is a Hamiltonian system with Hamiltonian given by
\begin{equation*}
    \frac{1}{2}\int_\Omega\left(\lvert\nabla q\rvert^2+\lvert\nabla p\rvert^2-\frac{\prme}{2}(q^2+p^2)^2\right)\,d\hc\,d\vc.
\end{equation*}
Problem \eqref{eq:NLSfom} is discretized in space using a second-order finite difference scheme on a uniform grid with $N_\hc$ and $N_\vc$ intervals in the $\hc$ and $\vc$ direction, respectively. 
The dimension of the full order problem is then $\Nf$ with $\Nfh:=N_\hc N_\vc$. Defining the vectors $\fsq\in\Rbb^{\Nfh}$ and $\fsp\in\Rbb^{\Nfh}$ approximating the nodal values of $q$ and $p$, respectively, the full order system reads
\begin{equation*}
    \Dot{\fsy}=\begin{pmatrix}
        \dfsq\\
        \dfsp
    \end{pmatrix}
    =J_{\Nf}\begin{pmatrix}
        -\Dbb \fsq-\prme(\fsq^2+\fsp^2)\odot \fsq \\
        -\Dbb \fsp-\prme(\fsq^2+\fsp^2)\odot \fsp
    \end{pmatrix}=J_{\Nf}\nabla_\fsy\Hcal_{\prm}(\fsy),
\end{equation*}
where $\Dbb\in\Rbb^{\Nfh\times\Nfh}$ is the discrete Laplacian matrix, $\odot$ denotes the component-wise product of vectors, $\fsq^2:=\fsq\odot\fsq$ and $\Hcal_{\prm}$ is the discrete Hamiltonian defined as
\begin{equation}\label{eq:HamNLS}
    \Hcal_{\prm}(\fsy):=\frac{1}{2}\big(-\fsq^\top\Dbb \fsq-\fsp^\top\Dbb\fsp-\frac{\prme}{2}\sum_{i=1}^{\Nfh}((\fsq)_i^2+(\fsp)_i^2)^2\big).
\end{equation}
Following the procedure described in \Cref{sec:adap_GPEIM}, we decompose
the non-quadratic part of the Hamiltonian \eqref{eq:HamNLS} as $\Hcal_{\prm}(\fsy)=\cdec^\top\Gdec(\fsy)$ where $\cdec$ is the vector of $\Rbb^\Nfh$ whose entries are all $1$ and 
\begin{equation*}
    \Gdec(\fsy):=-\frac{\prme}{4}(\fsq^2+\fsp^2)^2\in\Rbb^\Nfh.
\end{equation*}
Note that we have $\nd=\Nfh$ in this case.

In the numerical experiments we set $L_\hc=L_\vc=2\pi$ and $N_\hc=N_\vc=100$, so that the full system has dimension $\Nf=2N_\hc N_\vc=20000$. The final time is $T=3$ and we consider $\nt=12000$ uniform temporal intervals so that $\Delta t=2.5\times10^{-4}$. Note that the choice of such a relatively small time step is not dictated by stability issues but only to facilitate the study of the approximation error associated with the reduction and hyper-reduction without pollution from the temporal integration error.
The full order system is solved using the implicit midpoint rule, while the reduced and hyper-reduced systems are solved using the partitioned RK scheme described in \Cref{sec:temp_discr_hypreduced}.
Since the proposed time integrators are (semi-)implicit, a nonlinear system has to be solved for each test parameter at each time step: the Newton method with stopping tolerance $\tau=10^{-10}$ is chosen as nonlinear solver. This operation is parallelized in the numerical solution of the full and reduced system
using $16$ cores while no parallelization is performed on the hyper-reduced system. In fact, in the latter case the runtime to solve the resulting nonlinear system is so small that no computational gain is achieved by parallelizing the computation because of overhead costs.
For the sake of a fair comparison, the computational time per core per test parameter is taken as a unit of measure, in the sense that the computational time required by each operation is multiplied by the number of cores on which the operation is executed.
We assess the accuracy of the different algorithms in terms of relative errors in the Frobenius norm with respect to the full order solution, namely
\begin{equation*}
    \Ecal(t):=\frac{\norm{\fsR(t)-\rsR(t)}_F}{\norm{\fsR(t)}_F}
\end{equation*}
where $\rsR$ can be the numerical solution of the reduced system \eqref{eq:ROM} or of the hyper-reduced system \eqref{eq:hROM}.
Finally, we measure the error in the conservation of the Hamiltonian with respect to the initial condition via the average relative error
\begin{equation}\label{eq:err_hamcons}
    \Ecal_\Hcal(t):=\frac{1}{p}\sum_{i=1}^p\left\lvert\frac{\Hcal_{\eta_i}(W_i(t))-\Hcal_{\eta_i}(W_i(0))}{\Hcal_{\eta_i}(W_i(0))}\right\rvert.
\end{equation}
We shall consider two different test cases for problem \eqref{eq:schrodinger}. First, in \Cref{sec:exp_localized}, we choose the initial condition so that the solution is localized in space, and its numerical rank exhibits a moderate growth over time. Next, in \Cref{sec:exp_nonlocalized} we conduct a more challenging experiment on a non-localized solution, whose numerical rank increases steadily during the simulation. Our goal is to compare the performance of hyper-reduction in the two cases, and to show that our rank-adaptive algorithm is able to correctly follow the evolution of the numerical rank.


\subsection{Test 1: Localized solution}\label{sec:exp_localized}
In the first scenario, we consider as initial condition of problem \eqref{eq:schrodinger}
a hump placed at the center of the computational domain 
\begin{equation}\label{eq:ICtest1}
    u^0(\hc,\vc;\prmaic,\prmbic)=\frac{\sqrt{2}}{\cosh(\prmaic \hc)\cosh(\prmbic \vc)}\exp{\left(\imath\displaystyle\frac{\hc}{2}\right)}\exp{\left(\imath\displaystyle\frac{\vc}{2}\right)}.
\end{equation}
The parameter set for this problem is $\prms=[0.8,2]\times[0.8,2]\times[-1.5,-0.5]$; the set of test parameters is obtained by discretizing $\prms$ with $5$ uniformly spaced parameters in each direction, so that $p=5^3=125$. \Cref{fig:localized_fullsol_rankfullsol} (left) shows the numerical solution corresponding to one test parameter.
The time required to solve the full order system is $5241\,s$. 
\begin{figure}[h]
\begin{minipage}{0.6\linewidth}
    \centering
    \includegraphics[width=\textwidth]{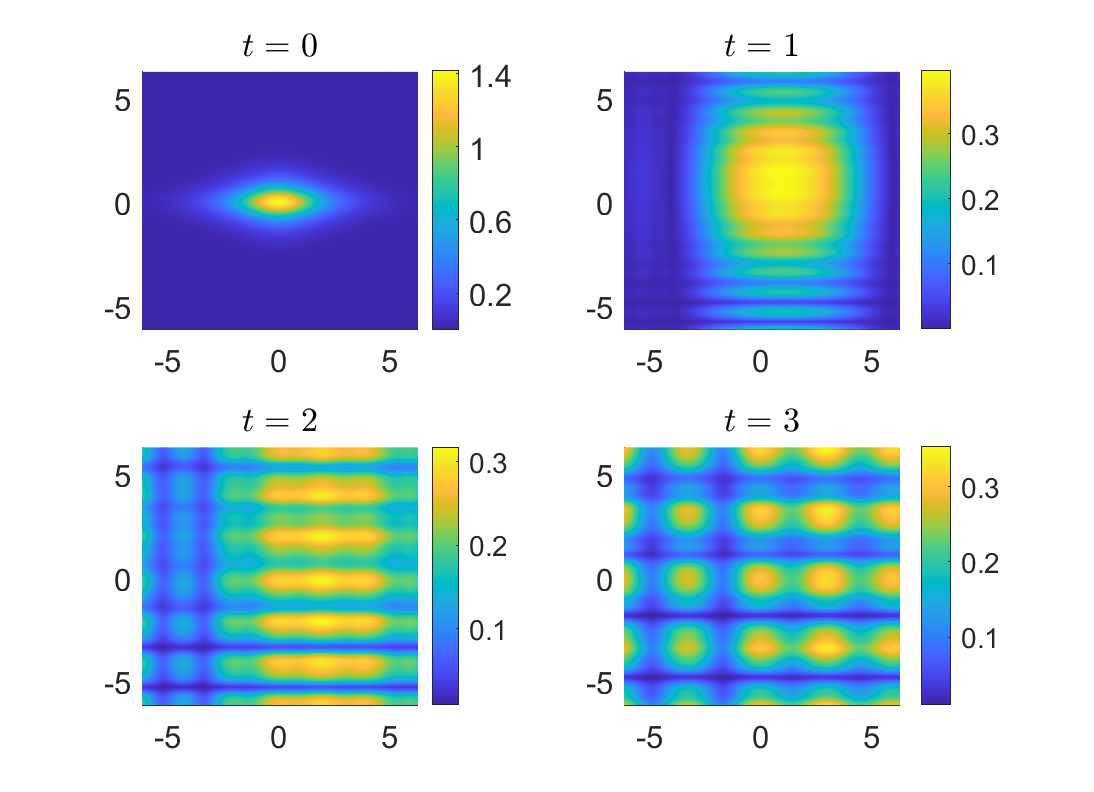}
\end{minipage}
\begin{minipage}{0.39\linewidth}
   \centering
\begin{tikzpicture}
    \begin{groupplot}[
      group style={group size=1 by 1,
                  horizontal sep=1.4cm},
      width=\textwidth, height=4.5cm
    ]
    \nextgroupplot[ylabel={$\epsilon$-rank},
                  xlabel={$t$},
                  xlabel style = {yshift=.2cm},
                  ylabel style = {yshift=-.5cm},
                  axis line style = thick,
                  grid=both,
                  minor tick num=0,
                  grid style = {gray,opacity=0.2},
                  xmin=0, xmax=3,
                  ymin=0, ymax=140,
                  xlabel style={font=\footnotesize},
                  ylabel style={font=\footnotesize},
                  x tick label style={font=\footnotesize},
                  y tick label style={font=\footnotesize},
                  legend style={font=\footnotesize},
                  legend cell align={left},
                  legend columns = 2,
                  legend style={at={(0.5,1.5)},anchor=north}]
        \addplot+[color=red,mark=none,line width=1.5pt] table[x=t,y=tau1e-1] {localized_rankfullsol_T.txt};
        \addplot+[color=green,mark=none,line width=1.5pt] table[x=t,y=tau1e-3] {localized_rankfullsol_T.txt};
        \addplot+[color=blue,mark=none,line width=1.5pt] table[x=t,y=tau1e-5] {localized_rankfullsol_T.txt};
        \addplot+[color=violet,mark=none,line width=1.5pt] table[x=t,y=tau1e-7] {localized_rankfullsol_T.txt};
        \addplot+[color=orange,mark=none,line width=1.5pt] table[x=t,y=tau1e-9] {localized_rankfullsol_T.txt};
        \legend{{$\epsilon=1e{-}01$}\;,{$\epsilon=1e{-}03$}\;,{$\epsilon=1e{-}05$}\;,{$\epsilon=1e{-}07$}\;,{$\epsilon=1e{-}09$}};
    \end{groupplot}
\end{tikzpicture}
  \end{minipage}
\caption{\footnotesize Test 1, localized solution. Left: modulus of the full order solution at four different time instants for $\prm=(0.8,2,-1.5)$. Right: $\epsilon$-rank of the full solution $\fsR(t)\in\Rbb^{20000\times 125}$.}\label{fig:localized_fullsol_rankfullsol}
\end{figure}
%
This test case is characterized by a growth of the numerical rank in the first stages of the simulation, as shown in \Cref{fig:localized_fullsol_rankfullsol} (right). We expect the error in the non-adaptive reduced and hyper-reduced systems to possess a similar behavior, while the rank-adaptive strategy should correctly capture this evolution and keep the error constant over time.

\subsubsection{Non-rank-adaptive algorithm}
In a first test, we assess the capability of the hyper-reduction strategy to reduce the computational cost of the reduced system while achieving a comparable error. To this end, we consider the non-rank-adaptive case by fixing the dimension of the reduced space, i.e., $2\nr=2\Nrz=24$ for $\tind=1,\ldots,\nt$.
The error of the reduced system is shown in \Cref{fig:localized_nonra_n12_m80}, left (dashed line):
the relative error is below $10^{-3}$ at the initial time, then increases rapidly at the beginning of the simulation, while the growth becomes less evident starting from about $t=0.5$. This reflects the behavior of the numerical rank of the full solution, see \Cref{fig:localized_fullsol_rankfullsol}. The hyper-reduced system is solved using the adaptive strategy described in \Cref{sec:adaptEIM}. In particular, the EIM adaption is performed at each time step with $\tau_{\nms}=10^{-4}$. We determine the dimension of the EIM space by performing POD on the nonlinear term at the initial time with tolerance $\tau_\nm=10^{-8}$, which results in $\nm_\tind=\nm_0=80$. The relative error of the hyper-reduced system is shown in \Cref{fig:localized_nonra_n12_m80} (left) for different choices of $\npAs$ and $\npUs$, whose evolution is reported in
\Cref{fig:localized_nonra_n12_m80} (right).

\begin{figure}[ht]
\centering
\begin{tikzpicture}
    \begin{groupplot}[
      group style={group size=2 by 1,
                  horizontal sep=1.4cm},
      width=6.5cm, height=5cm
    ]
    \nextgroupplot[ylabel={$\mathcal{E}(t)$},
                  xlabel={$t$},                  
                  xlabel style = {yshift=.2cm},
                  ylabel style = {yshift=-.2cm},
                  axis line style = thick,
                  grid=both,
                  minor tick num=0,
                  ytick={5e-04,1e-03,5e-03},
                  grid style = {gray,opacity=0.2},
                  xmin=0, xmax=3,
                  ymin=5e-04, ymax=5e-03,
                  ymode=log,
                  xlabel style={font=\footnotesize},
                  ylabel style={font=\footnotesize},
                  x tick label style={font=\footnotesize},
                  y tick label style={font=\footnotesize},
                  legend style={font=\footnotesize},
                  legend cell align={left},
                  legend columns = 1,
                  legend style={at={(0.55,0.65)},anchor=north}]
        \addplot+[color=black,dashed,mark=none,line width=1.5pt] table[x=t,y=red] {localized_nonra_errors_n12.txt};
        \addplot+[color=ForestGreen,mark=none,line width=1.5pt] table[x=t,y=hred_pApU] {localized_nonra_errors_n12.txt};
        \addplot+[color=Cyan,mark=none,line width=1.5pt] table[x=t,y=hred_pAspU] {localized_nonra_errors_n12.txt};
        \addplot+[color=YellowOrange,mark=none,line width=1.5pt] table[x=t,y=hred_pApUs] {localized_nonra_errors_n12.txt};
        \addplot+[color=red,mark=none,line width=1.5pt] table[x=t,y=hred_pAspUs] {localized_nonra_errors_n12.txt};
        \legend{{ROM},{hROM, $\npAs=\np,\npUs=\np$},{hROM, $\npAs<\np,\npUs=\np$},{hROM, $\npAs=\np,\npUs<\np$},{hROM, $\npAs<\np,\npUs<\np$}};
    \nextgroupplot[ylabel={$\npUs,\npAs$},
                  xlabel={$t$},
                  xlabel style = {yshift=.2cm},
                  ylabel style = {yshift=-.4cm},
                  axis line style = thick,
                  grid=both,
                  minor tick num=0,
                  grid style = {gray,opacity=0.2},
                  xmin=0, xmax=3,
                  ymin=0, ymax=25,
                  xlabel style={font=\footnotesize},
                  ylabel style={font=\footnotesize},
                  x tick label style={font=\footnotesize},
                  y tick label style={font=\footnotesize}]   
        \addplot+[color=Cyan,dashed,mark=none,line width=1.5pt] table[x=tpAs_pAspU,y=pAs_pAspU]{localized_nonra_pAstar_pUstar_n12.txt};
        \addplot+[color=YellowOrange,mark=none,line width=1.5pt] table[x=tpUs_pApUs,y=pUs_pApUs] {localized_nonra_pAstar_pUstar_n12.txt};
        \addplot+[color=red,mark=none,line width=1.5pt] table[x=tpUs_pAspUs,y=pUs_pAspUs] {localized_nonra_pAstar_pUstar_n12.txt};
        \addplot+[color=red,dashed,dash phase=3pt,mark=none,line width=1.5pt] table[x=tpAs_pAspUs,y=pAs_pAspUs] {localized_nonra_pAstar_pUstar_n12.txt};
    \end{groupplot}
\end{tikzpicture}
\caption{\footnotesize Test 1, localized solution. Dimension of the dynamical reduced basis space $\Nr=2\Nrz=24$. Left: evolution of the relative errors of the reduced and hyper-reduced systems for $\nm_\tind=\nm_0=80$ and with different choices of the sample parameters. Right: evolution of $\npUs$ (solid lines) and of $\npAs$ (dashed lines).}\label{fig:localized_nonra_n12_m80}
\end{figure}

\Cref{fig:localized_times_err_ms} (left) reports the computational times and the relative errors at the final time. 
In all cases considered, the computational times are reduced by at least a factor $13$ compared to the reduced system with an almost negligible effect on the relative error. Moreover,
the hyper-reduced system with $\npAs<\np$ and $\npUs<\np$ is solved in about half of the time required to perform the same task with $\npAs=\npUs=\np$. 
In this experiment, the use of a small number $\npUs$ of sample parameters for EIM adaptivity leads to a considerable computational gain. In particular, the quantity $\dsc$ defined in \Cref{sec:adaptEIM} can be evaluated cheaply
since only about $\nms=1000\ll d$ sample indices need to be selected when the tolerance is set to $\tau_{\nms}=10^{-4}$ (see \Cref{fig:localized_times_err_ms}, right).
This is related to the decay of local coherence of the EIM space: as noted in \cite[Section 3.3]{Peher20}, when a problem is characterized by moving coherent structures that are local in space, it is typically sufficient to adapt the EIM basis at a few sample indices only.


\begin{figure}[h]
\begin{minipage}{0.5\linewidth}
{\footnotesize
    \begin{center}
    \begin{tabular}{|c|c|c|}\hline
        Model & Time [s]  & $\Ecal(T)$ \\ \hline
        ROM & 675.7 & 3.12e-03 \\ \hline
        hROM, $\npAs=\np,\npUs=\np$ & 50.5 & 3.28e-03 \\
        hROM, $\npAs<\np,\npUs=\np$ & 45.6 & 3.64e-03 \\
        hROM, $\npAs=\np,\npUs<\np$ & 34.0 & 3.69e-03 \\
        hROM, $\npAs<\np,\npUs<\np$ & 27.5 & 4.14e-03 \\ \hline
    \end{tabular}
    \end{center}
}
\end{minipage}\hfill
\begin{minipage}{0.45\linewidth}
   \centering
    \begin{tikzpicture}
    \begin{groupplot}[
      group style={group size=1 by 1,
                  horizontal sep=1.4cm},
      width=5.5cm, height=4.5cm
    ]
    \nextgroupplot[ylabel={$\nms$},
                  xlabel={$t$},
                  xlabel style = {yshift=.2cm},
                  ylabel style = {yshift=-.2cm},
                  axis line style = thick,
                  grid=both,
                  minor tick num=0,
                  grid style = {gray,opacity=0.2},
                  xmin=0, xmax=3,
                  ymin=0, ymax=10000,
                  xlabel style={font=\footnotesize},
                  ylabel style={font=\footnotesize},
                  x tick label style={font=\footnotesize},
                  y tick label style={font=\footnotesize},
                  legend style={font=\footnotesize},
                  legend cell align={left},
                  legend columns = 1,
                  legend style={at={(0.39,1)},anchor=north}]
        \addplot+[color=ForestGreen,mark=none,each nth point={10},line width=0.5pt] table[x=t,y=pApU] {localized_nonra_ms_n12.txt};
        \addplot+[color=Cyan,mark=none,each nth point={10},line width=0.5pt] table[x=t,y=pAspU] {localized_nonra_ms_n12.txt};
        \addplot+[color=YellowOrange,mark=none,each nth point={10},line width=0.5pt] table[x=t,y=pApUs] {localized_nonra_ms_n12.txt};
        \addplot+[color=red,mark=none,each nth point={10},line width=0.5pt] table[x=t,y=pAspUs] {localized_nonra_ms_n12.txt};
        \legend{{$\npAs=\np,\npUs=\np$},{$\npAs<\np,\npUs=\np$},{$\npAs=\np,\npUs<\np$},{$\npAs<\np,\npUs<\np$}};
    \end{groupplot}
    \end{tikzpicture}
  \end{minipage}
\caption{Test 1, localized solution. Non-rank-adaptive case, $\Nr=2\Nrz=24$, $\nm_\tind=\nm_0=80$. Left: computational times and relative errors at the final time. Right: evolution of the number $\nms$ of EIM sample indices selected according to \Cref{alg:DEIM_update} with $\tau_{\nms}=10^{-4}$.}\label{fig:localized_times_err_ms}
\end{figure}

\subsubsection{Rank-adaptive algorithm}
In a second test we numerically show how the rank-adaptive strategy of \Cref{sec:rank-adaptivity} can be used to overcome the growth of the approximation errors introduced by the reduction. The rank-adaptive algorithm uses the error indicator $\indres$ defined in \eqref{eq:residual_errind} and computed at every time step with $\radapt=\cadapt=1.1$ for the update criterion. Numerical tests (not reported here) show that the choice of these parameters has little impact on the quality of the approximation (see also \cite{HPR22}).
We consider the cases $2\Nrz=24$ and $2\Nrz=40$.
Whenever the dimension $\Nr$ of the reduced basis space is updated, the EIM dimension $\nm$ is modified as in \Cref{alg:rank-update}
with tolerance $\tau_\nm=10^{-4}$
and $\tau_\nm=10^{-6}$, respectively.
The initial value of $\nm$ is determined based on the same tolerance, which gives $\nm_0=38$ and $\nm_0=83$, respectively. Finally, we consider $\npAs<\np$ and $\npUs<\np$ sample parameters to solve the hyper-reduced system with rank adaptivity. \Cref{fig:localized_ra} (left) shows the evolution of the relative errors in the reduced and hyper-reduced system, and the comparison with the same quantities without rank adaptivity (indicated with ``NRA'' in the legend).
We also report the evolution of the values of the reduced dimension $\Nr$ (central plot) and of the EIM dimension $\nm_\tind$ (right plot). 

\begin{figure}[ht]
\centering
\begin{tikzpicture}
    \begin{groupplot}[
      group style={group size=3 by 2,
                  horizontal sep=1.4cm, vertical sep=.5cm},
      width=4.5cm, height=4.5cm,
    ]
    \nextgroupplot[ylabel={$\Ecal(t)$},
                  ylabel style = {yshift=-.2cm},
                  axis line style = thick,
                  grid=both,
                  minor tick num=0,
                  ytick={5e-04,1e-03,5e-03},
                  ymode=log,
                  grid style = {gray,opacity=0.2},
                  xmin=0, xmax=3,
                  ymin=5e-04, ymax=5e-03,
                  xlabel style={font=\footnotesize},
                  ylabel style={font=\footnotesize},
                  x tick label style={font=\footnotesize},
                  y tick label style={font=\footnotesize},
                  legend style={font=\footnotesize},
                  legend cell align={left},
                  legend columns = 2,
                  legend style={at={(1.95,1.4)},anchor=north}]
        \addplot+[color=black,mark=none,line width=1.5pt] table[x=t,y=red] {localized_ra_errors_n12.txt};
        \addplot+[color=red,mark=none,line width=1.5pt] table[x=t,y=hred] {localized_ra_errors_n12.txt};
        \addplot+[color=black,dashed,mark=none,line width=1.5pt] table[x=t,y=red] {localized_nonra_errors_n12.txt};
        \addplot+[color=red,dashed,mark=none,line width=1.5pt] table[x=t,y=hred_pAspUs] {localized_nonra_errors_n12.txt};
        \legend{{ROM-RA},{hROM-RA, $\npAs<\np$, $\npUs<\np$},{ROM-NRA},{hROM-NRA, $\npAs<\np$, $\npUs<\np$}};
        \nextgroupplot[ylabel={$\Nr$},
                  ylabel style = {yshift=-.5cm},
                  axis line style = thick,
                  grid=both,
                  minor tick num=0,
                  grid style = {gray,opacity=0.2},
                  xmin=0, xmax=3,
                  ymin=20, ymax=45,
                  xlabel style={font=\footnotesize},
                  ylabel style={font=\footnotesize},
                  x tick label style={font=\footnotesize},
                  y tick label style={font=\footnotesize}]
        \addplot+[color=black,mark=none,line width=1.5pt] table[x=tred,y=red2n] {localized_ra_2n_m_n12.txt};
        \addplot+[color=red,mark=none,line width=1.5pt] table[x=thred,y=hred2n] {localized_ra_2n_m_n12.txt};
        \addplot+[color=black,dashed,mark=none,line width=1.5pt] table[x=tna,y=2nna] {localized_ra_2n_m_n12.txt};
        \addplot+[color=red,dashed,dash phase=3pt,mark=none,line width=1.5pt] table[x=tna,y=2nna] {localized_ra_2n_m_n12.txt};
    \nextgroupplot[ylabel={$\nm_\tind$},
                  ylabel style = {yshift=-.4cm},
                  axis line style = thick,
                  grid=both,
                  minor tick num=0,
                  grid style = {gray,opacity=0.2},
                  xmin=0, xmax=3,
                  ymin=0, ymax=400,
                  xlabel style={font=\footnotesize},
                  ylabel style={font=\footnotesize},
                  x tick label style={font=\footnotesize},
                  y tick label style={font=\footnotesize}]
        \addplot+[color=red,mark=none,line width=1.5pt] table[x=thred,y=hredm] {localized_ra_2n_m_n12.txt};
        \addplot+[color=red,dashed,mark=none,line width=1.5pt] table[x=tna,y=mna] {localized_ra_2n_m_n12.txt};
    \nextgroupplot[ylabel={$\Ecal(t)$},
                  xlabel={$t$},
                  xlabel style = {yshift=.2cm},
                  ylabel style = {yshift=-.2cm},
                  axis line style = thick,
                  grid=both,
                  minor tick num=0,
                  minor tick style={draw=none},
                  ytick={1e-05,1e-04,1e-03,1e-02},
                  ymode=log,
                  grid style = {gray,opacity=0.2},
                  xmin=0, xmax=3,
                  ymin=1e-05, ymax=1e-02,
                  xlabel style={font=\footnotesize},
                  ylabel style={font=\footnotesize},
                  x tick label style={font=\footnotesize},
                  y tick label style={font=\footnotesize},
                  legend style={font=\footnotesize},
                  legend cell align={left},
                  legend columns = 2,
                  legend style={at={(1.95,1.4)},anchor=north}]
        \addplot+[color=black,mark=none,line width=1.5pt] table[x=t,y=rared] {localized_ra_errors_n20.txt};
        \addplot+[color=red,mark=none,line width=1.5pt] table[x=t,y=rahred] {localized_ra_errors_n20.txt};
        \addplot+[color=black,dashed,mark=none,line width=1.5pt] table[x=t,y=nrared] {localized_ra_errors_n20.txt};
        \addplot+[color=red,dashed,mark=none,line width=1.5pt] table[x=t,y=nrahred] {localized_ra_errors_n20.txt};        
    \nextgroupplot[ylabel={$\Nr$},
                  xlabel={$t$},
                  xlabel style = {yshift=.2cm},
                  ylabel style = {yshift=-.5cm},
                  axis line style = thick,
                  grid=both,
                  minor tick num=0,
                  grid style = {gray,opacity=0.2},
                  xmin=0, xmax=3,
                  ymin=35, ymax=60,
                  xlabel style={font=\footnotesize},
                  ylabel style={font=\footnotesize},
                  x tick label style={font=\footnotesize},
                  y tick label style={font=\footnotesize}]
        \addplot+[color=black,mark=none,line width=1.5pt] table[x=tred,y=red2n] {localized_ra_2n_m_n20.txt};
        \addplot+[color=red,mark=none,line width=1.5pt] table[x=thred,y=hred2n] {localized_ra_2n_m_n20.txt};
        \addplot+[color=black,dashed,mark=none,line width=1.5pt] table[x=tna,y=2nna] {localized_ra_2n_m_n20.txt};
        \addplot+[color=red,dashed,dash phase=3pt,mark=none,line width=1.5pt] table[x=tna,y=2nna] {localized_ra_2n_m_n20.txt};
    \nextgroupplot[ylabel={$\nm_\tind$},
                  xlabel={$t$},
                  xlabel style = {yshift=.2cm},
                  ylabel style = {yshift=-.4cm},
                  axis line style = thick,
                  grid=both,
                  minor tick num=0,
                  grid style = {gray,opacity=0.2},
                  xmin=0, xmax=3,
                  ymin=0, ymax=600,
                  xlabel style={font=\footnotesize},
                  ylabel style={font=\footnotesize},
                  x tick label style={font=\footnotesize},
                  y tick label style={font=\footnotesize}]
        \addplot+[color=red,mark=none,line width=1.5pt] table[x=thred,y=hredm] {localized_ra_2n_m_n20.txt};
        \addplot+[color=red,dashed,mark=none,line width=1.5pt] table[x=tna,y=mna] {localized_ra_2n_m_n20.txt};
    \end{groupplot}
\end{tikzpicture}
\caption{\footnotesize Test 1, localized solution. First row: $2\Nrz=24$, $\tau_\nm=\tau_{\nms}=10^{-4}$. Second row: $2\Nrz=40$, $\tau_\nm=\tau_{\nms}=10^{-6}$. Left: evolution of the relative errors of the reduced and hyper-reduced systems. Center: evolution of the dimension of the reduced space. Right: evolution of the dimension of the EIM space.}\label{fig:localized_ra}
\end{figure}

We observe that the value of $\Nr$ is adapted frequently in the early stages of the simulation, while the update frequency decreases sensibly as time evolves. In this sense, our adaptive strategy is able to correctly capture the behavior of the numerical rank of the full solution reported in \Cref{fig:localized_fullsol_rankfullsol}. Similarly, we observe a variation of the EIM dimension $\nm_\tind$, with its value at the final time being about $3\%$ and $6\%$ of the full dimension $\nd=\Nfh$ for $2\Nrz=24$ and $2\Nrz=40$, respectively. This is sufficient to yield a computational gain compared to the reduced system, as it can be inferred from the data reported in \Cref{tab:localized_ra}. The rank-adaptive algorithm for the hyper-reduced system is about $17$ and $4$ times faster than the rank-adaptive algorithm applied to the reduced system in the two cases, with comparable errors at the final time. We also observe that rank adaptivity yields lower errors at the final time compared to the non-rank-adaptive case in both the reduced and hyper-reduced systems, at the cost of a higher runtime. This interplay will be discussed further in the comment to \Cref{fig:localized_ra_comptimes}.

\begin{table}[h]
    {\footnotesize 
    \caption{Test 1, localized solution. Rank-adaptive case.
    Computational times and relative errors at the final time.
    Left: $2\Nrz=24$, $\tau_\nm=\tau_{\nms}=10^{-4}$. Right: $2\Nrz=40$, $\tau_\nm=\tau_{\nms}=10^{-6}$.}\label{tab:localized_ra}
    \begin{center}
    \begin{tabular}{|c|c|c|}\hline
        Model $2\Nrz=24$ & Runtime [s] & $\Ecal(T)$ \\ \hline
        ROM-RA & 1984.0 & 8.62e-04 \\ 
        hROM-RA & 116.4 & 9.90e-04 \\ \hline 
        ROM-NRA & 675.7 & 3.12e-03 \\ 
        hROM-NRA & 27.5 & 4.14e-03 \\ \hline 
    \end{tabular}\quad\;
        \begin{tabular}{|c|c|c|}\hline
        Model $2\Nrz=40$ & Runtime [s] & $\Ecal(T)$ \\ \hline
        ROM-RA & 1879.8 & 1.78e-04 \\ 
        hROM-RA & 469.9 & 1.94e-04 \\ \hline 
        ROM-NRA & 1360.7 & 6.50e-04 \\ 
        hROM-NRA & 38.0 & 1.56e-03 \\ \hline 
    \end{tabular}
    \end{center}
    }
\end{table}

Notice that the tolerances $\tau_\nm$ and $\tau_\nms$ and the EIM update frequency $\deimf$ have been kept fixed in these experiments. On the other hand, it is reasonable to assume that, as $\nm$ increases, one may afford larger values for these quantities and, hence, the computational gain in the hyper-reduced system might be improved when adaptive strategies to select $\deimf$ and $\tau_\nm$ are implemented. A rigorous development of this strategy is left for future work. Here we implement a preliminary test, where the value of $\tau_\nm$ is modified during the online phase. In particular, we fix $\tau_{\nms}=10^{-6}$ while we set $\tau_\nm=10^{-6}$ for $t\leq0.5$ and $\tau_\nm=10^{-4}$ for $t>0.5$. The threshold $t=0.5$ is determined \emph{a posteriori} from the behavior of the relative error over time displayed in \Cref{fig:localized_ra}.  With this choice, the computational time is $259.4\, s$, a reduction of almost $50\%$ compared to the case of fixed $\tau_\nm$, with a comparable relative error at the final time. This example shows the advantages of an adaptive strategy for the selection of the tolerance $\tau_\nm$.

To summarize, \Cref{fig:localized_ra_comptimes} shows the errors at the final time as a function of the computational time in the experiments discussed above.
\begin{figure}[ht]
\centering
\begin{tikzpicture}[every mark/.append style={mark size=2.5pt, line width=.8pt},scale=1]
    \begin{groupplot}[
      group style={group size=1 by 1,
                  horizontal sep=1.4cm},
      width=6.5cm, height=4.5cm
    ]
    \nextgroupplot[ylabel={$\Ecal(T)$},
                  xlabel={Runtime [s]},
                  xlabel style = {xshift=-.2cm},
                  ylabel style = {yshift=-.2cm},
                  axis line style = thick,
                  grid=both,
                  minor tick num=0,
                  xmode = log,
                  ymode = log,
                  grid style = {gray,opacity=0.2},
                  xmin=10^1, xmax=10^4,
                  ymin=10^(-4), ymax=10^(-2),
                  xlabel style={font=\footnotesize},
                  ylabel style={font=\footnotesize},
                  x tick label style={font=\footnotesize},
                  y tick label style={font=\footnotesize},
                  legend style={font=\footnotesize},
                  legend cell align={left},
                  legend columns = 1,
                  legend style={at={(1.6,1.08)},anchor=north}]
        \addplot+[only marks,color=black,mark=square] table[x=trnra12,y=ernra12] {localized_ra_comptimes.txt};
        \addplot+[only marks,color=black,mark=diamond] table[x=trra12,y=erra12] {localized_ra_comptimes.txt};
        \addplot+[only marks,color=black,mark=square*,mark options={solid,fill=black,line width=.8pt,mark size=2.5pt}] table[x=trnra20,y=ernra20] {localized_ra_comptimes.txt};
        \addplot+[only marks,color=black,mark=diamond*,mark options={solid,fill=black,line width=.8pt,mark size=2.5pt}] table[x=trra20,y=erra20] {localized_ra_comptimes.txt};
        \addplot+[only marks,color=red,mark=square] table[x=thrnra12,y=ehrnra12] {localized_ra_comptimes.txt};
        \addplot+[only marks,color=red,mark=diamond] table[x=thrra12,y=ehrra12] {localized_ra_comptimes.txt};
        \addplot+[only marks,color=red,mark=square*,mark options={solid,fill=red,line width=.8pt,mark size=2.5pt}] table[x=thrnra20,y=ehrnra20] {localized_ra_comptimes.txt};
        \addplot+[only marks,color=red,mark=diamond*,mark options={solid,fill=red,line width=.8pt,mark size=2.5pt}] table[x=thrra20,y=ehrra20] {localized_ra_comptimes.txt};
        \addplot+[only marks,color=yellow,mark=*,mark options={solid,fill=yellow,line width=.8pt,mark size=2.5pt}] table[x=tadapttaum,y=eadapttaum] {localized_ra_comptimes.txt};
        \legend{{ROM-NRA, $2\Nrz=24$},{ROM-RA, $2\Nrz=24$},{ROM-NRA, $2\Nrz=40$},{ROM-RA, $2\Nrz=40$},{hROM-NRA, $2\Nrz=24$},{hROM-RA, $2\Nrz=24$},{hROM-NRA, $2\Nrz=40$},{hROM-RA, $2\Nrz=40$},{hROM-RA, $2\Nrz=40$, variable $\tau_\nm$}};
    \end{groupplot}
\end{tikzpicture}
\caption{\footnotesize Test 1, localized solution. Relative errors at the final time vs. computational time.}\label{fig:localized_ra_comptimes}
\end{figure}
First, all red markers are to the left of the corresponding black markers meaning that hyper-reduction allows a reduced computational time to achieve comparable errors at the final time, both in the non-rank-adaptive and in the rank-adaptive case.
%
The increased runtime of the rank-adaptive algorithms is more evident in the hyper-reduced system when the dimension of the reduced space is large, because the EIM space dimension $\nm$ is also enlarged due to rank adaptation. Nevertheless, efficiency can be improved in these cases by an adaptive selection of the tolerance $\tau_\nm$ and/or of the update frequency $\deimf$ (circle mark in \Cref{fig:localized_ra_comptimes}).


For the reduced order model, we observe that the rank-adaptive reduced system with $2\Nrz=24$ (black empty diamond) is as computationally expensive as the non-rank-adaptive reduced system with $2\Nrz=40$ (black full square), with a slightly larger error at the final time. This is due to the fact that, as shown in \Cref{fig:localized_ra} (center), the dimension of the reduced space in the rank-adaptive case increases rapidly at the beginning of the simulation, reaching the value $\Nr=40$ at about $t=0.5$ in agreement with the rapid growth of the numerical rank of the full solution. Therefore, the reduced space in the rank-adaptive case is considerably smaller than in the non-adaptive case only in the first few time steps, which explains the similar runtimes of the two schemes. Moreover, the error in the rank-adaptive case is determined by the choice of the initial reduced dimension $2\Nrz$. Since the error in the non-adaptive case does not exhibit a significant growth at later stages of the simulation, its value at the final time is smaller than the one obtained with rank adaptivity and a smaller initial reduced space. 
Although this represents a possible case scenario, we expect, in general, the rank-adaptive (hyper-)reduced system to outperform its non-rank-adaptive counterpart with a larger initial reduced space in all situations where the rank of the full order solution undergoes significant changes over time.

For the case $2\Nrz=24$, we present in \Cref{fig:localized_ra_errind_n12} the time evolution of the error indicator $\indres$, and a comparison with the angle $\indtheta$ from \cite{CL23} and defined in \eqref{eq:theta_errind}. The latter fails to identify the increase in the rank of the solution: its value never satisfies the update criterion so that no rank update is performed. 

\begin{figure}[ht]
\centering
\begin{tikzpicture}
    \begin{groupplot}[
      group style={group size=2 by 1,
                  horizontal sep=1.5cm},
      width=6.5cm, height=4.5cm,
    ]
    \nextgroupplot[ylabel={$\indres(t)$},
                  xlabel={$t$},
                  xlabel style = {yshift=.2cm},
                  ylabel style = {yshift=-.2cm},
                  axis line style = thick,
                  grid=both,
                  minor tick num=0,
                  ytick={1e-05,1e-04,1e-03,1e-02,1e-01,1e+00},
                  ymode=log,
                  grid style = {gray,opacity=0.2},
                  xmin=0, xmax=3,
                  ymin=1e-05, ymax=1e+00,
                  xlabel style={font=\footnotesize},
                  ylabel style={font=\footnotesize},
                  x tick label style={font=\footnotesize},
                  y tick label style={font=\footnotesize},
                  legend style={font=\footnotesize},
                  legend cell align={left},
                  legend columns = 2,
                  legend style={at={(1.8,1.4)},anchor=north}]
        \addplot+[color=blue,mark=none,line width=1.5pt] table[x=t,y=res,each nth point={10}] {localized_ra_errind_n12.txt};
    \nextgroupplot[ylabel={$\indtheta(t)$},
                  xlabel={$t$},
                  xlabel style = {yshift=.2cm},
                  ylabel style = {yshift=-.2cm},
                  axis line style = thick,
                  grid=both,
                  minor tick num=0,
                  grid style = {gray,opacity=0.2},
                  ymode=log,
                  xmin=0, xmax=3,
                  ymin=1e-04, ymax=1e-02,
                  xlabel style={font=\footnotesize},
                  ylabel style={font=\footnotesize},
                  x tick label style={font=\footnotesize},
                  y tick label style={font=\footnotesize}]
        \addplot+[color=blue,mark=none,line width=1.5pt] table[x=t,y=theta,each nth point={10}] {localized_ra_errind_n12.txt};
    \end{groupplot}
\end{tikzpicture}
\caption{\footnotesize Test 1, localized solution.
Time evolution of the error indicators $\indres$ (left) and $\indtheta$ (right).}\label{fig:localized_ra_errind_n12}
\end{figure}

Finally, we report in \Cref{fig:localized_hamcons} the time evolution of the error \eqref{eq:err_hamcons} in the Hamiltonian conservation in the full order model and in the hyper-reduced system, with and without rank adaptivity. Although our method preserves the geometric structure of the full order system, we cannot guarantee exact preservation of the Hamiltonian, for several reasons. First, the Hamiltonian $\hrham$ associated to the hyper-reduced system is an approximation of the full order Hamiltonian $\Hcal$. Second, the time integrator employed in the numerical experiments is symplectic but does not preserve the Hamiltonian exactly at each time step. Third, the use of splitting techniques implies that, at each time step, the approximated solution is projected onto the space spanned by the updated basis, introducing an error in the conservation of invariants. Nevertheless, we achieve a good control on the Hamiltonian error, whose growth remains bounded over time.
\begin{figure}[ht]
\centering
\begin{tikzpicture}
    \begin{groupplot}[
      group style={group size=1 by 1,
                  horizontal sep=1.5cm},
      width=6.5cm, height=4.5cm,
    ]
    \nextgroupplot[ylabel={$\Ecal_\Hcal(t)$},
                  xlabel={$t$},
                  xlabel style = {yshift=.2cm},
                  ylabel style = {yshift=-.2cm},
                  axis line style = thick,
                  grid=both,
                  minor tick num=0,
                  ytick={1e-10,1e-09,1e-08,1e-07,1e-06,1e-05,1e-04,1e-03},
                  ymode=log,
                  grid style = {gray,opacity=0.2},
                  xmin=0, xmax=3,
                  ymin=1e-10, ymax=1e-03,
                  xlabel style={font=\footnotesize},
                  ylabel style={font=\footnotesize},
                  x tick label style={font=\footnotesize},
                  y tick label style={font=\footnotesize},
                  legend style={font=\footnotesize},
                  legend cell align={left},
                  legend columns = 1,
                  legend style={at={(1.6,0.94)},anchor=north}]
        \addplot+[color=ForestGreen,mark=none,dashed,line width=1.5pt] table[x=t,y=pUpA] {localized_hamcons.txt};
        \addplot+[color=Cyan,mark=none,dashed,line width=1.5pt] table[x=t,y=pUpAs] {localized_hamcons.txt};
        \addplot+[color=YellowOrange,mark=none,dashed,line width=1.5pt] table[x=t,y=pUspA] {localized_hamcons.txt};
        \addplot+[color=Red,mark=none,dashed,line width=1.5pt] table[x=t,y=pUspAs] {localized_hamcons.txt};
        \addplot+[color=Red,mark=none,solid,line width=1.5pt] table[x=t,y=ra] {localized_hamcons.txt};
        \addplot+[color=black,mark=none,dashed,line width=1.5pt] table[x=t,y=full] {localized_hamcons.txt};
        \legend{{hROM-NRA, $\npAs=\np,\npUs=\np$},{hROM-NRA, $\npAs<\np,\npUs=\np$},{hROM-NRA, $\npAs=\np,\npUs<\np$},{hROM-NRA, $\npAs<\np,\npUs<\np$},{hROM-RA, $\npAs<\np,\npUs<\np$},{FOM}};
    \end{groupplot}
\end{tikzpicture}
\caption{\footnotesize Test 1, localized solution. Evolution of the error \eqref{eq:err_hamcons} in the conservation of the Hamiltonian.}\label{fig:localized_hamcons}
\end{figure}

\subsection{Test 2: Non-localized solution}\label{sec:exp_nonlocalized}

In a second test, we fix $\prme=-1$ in equation \eqref{eq:schrodinger} and we consider the following initial condition \cite{HPR22}
\begin{equation*}
    u^0(\hc,\vc;\prmaic,\prmbic)=(1+\prmaic\sin{\hc})(2+\prmbic\sin{\vc}).
\end{equation*}
The parameter vector is $\prm=(\prmaic,\prmbic)$ and we take $\prms=[0.97,1.03]\times[0.97,1.03]\subset\Rbb^2$ as parameter set. We solve \eqref{eq:schrodinger} for $\np=100$ test parameters obtained by sampling $\prms$ with $10$ equispaced parameters in each direction. The full order system is solved in about $6258\,s$.
The numerical solution is shown in \Cref{fig:nonlocalized_fullsol_rankfullsol} for one test parameter. We observe that, unlike in the previous case, this problem is not characterized by coherent structures that are local in space, except for the initial stages of the simulation. 
Moreover, the numerical rank keeps increasing during the simulation (\Cref{fig:nonlocalized_fullsol_rankfullsol} right) which makes it a challenging test for our rank-adaptive strategy.
\begin{figure}[h]
\begin{minipage}{0.6\textwidth}
    \centering
    \includegraphics[width=\textwidth]{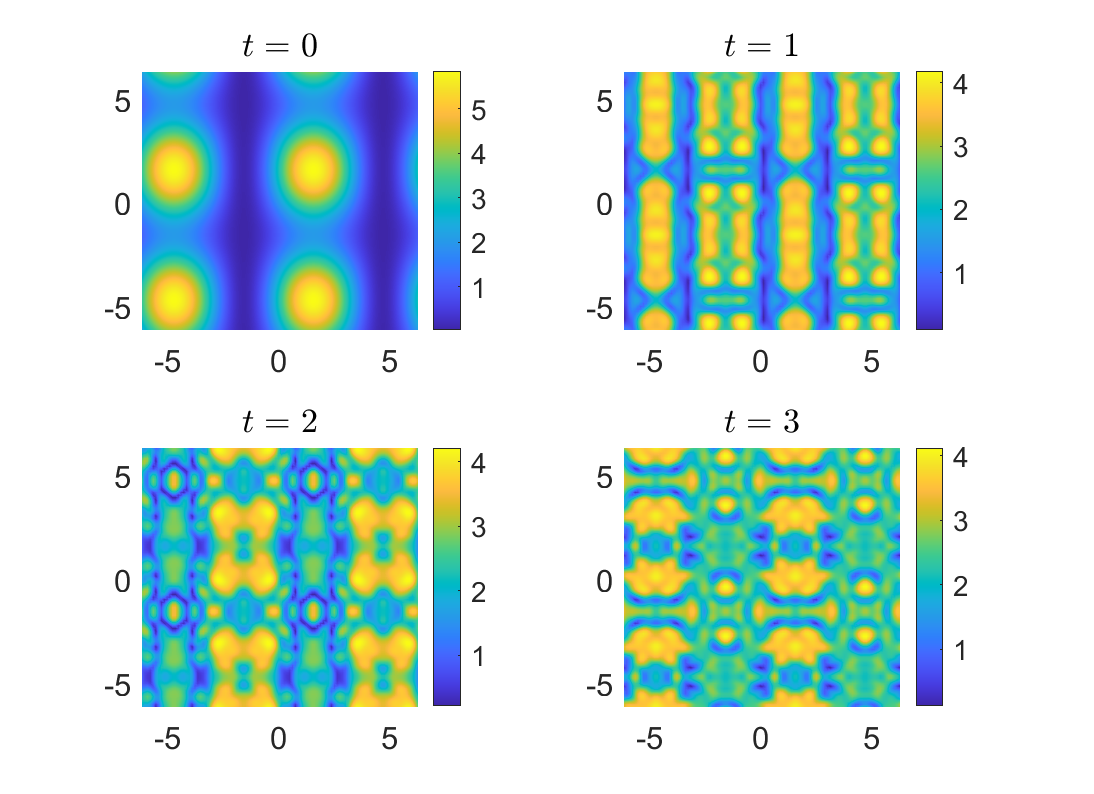}
\end{minipage}
\begin{minipage}{0.39\textwidth}
   \centering
\begin{tikzpicture}
    \begin{groupplot}[
      group style={group size=1 by 1,
                  horizontal sep=1.4cm},
      width=\textwidth, height=4.5cm
    ]
    \nextgroupplot[ylabel={$\epsilon$-rank},
                  xlabel={$t$},
                  xlabel style = {yshift=.2cm},
                  ylabel style = {yshift=-.5cm},
                  axis line style = thick,
                  grid=both,
                  minor tick num=0,
                  grid style = {gray,opacity=0.2},
                  xmin=0, xmax=3,
                  ymin=0, ymax=100,
                  xlabel style={font=\footnotesize},
                  ylabel style={font=\footnotesize},
                  x tick label style={font=\footnotesize},
                  y tick label style={font=\footnotesize},
                  legend style={font=\footnotesize},
                  legend cell align={left},
                  legend columns = 2,
                  legend style={at={(0.5,1.5)},anchor=north}]
        \addplot+[color=red,mark=none,line width=1.5pt] table[x=t,y=tau1e-1] {nonlocalized_rankfullsol_T.txt};
        \addplot+[color=green,mark=none,line width=1.5pt] table[x=t,y=tau1e-3] {nonlocalized_rankfullsol_T.txt};
        \addplot+[color=blue,mark=none,line width=1.5pt] table[x=t,y=tau1e-5] {nonlocalized_rankfullsol_T.txt};
        \addplot+[color=violet,mark=none,line width=1.5pt] table[x=t,y=tau1e-7] {nonlocalized_rankfullsol_T.txt};
        \addplot+[color=orange,mark=none,line width=1.5pt] table[x=t,y=tau1e-9] {nonlocalized_rankfullsol_T.txt};
        \legend{{$\epsilon=1e{-}01$}\;,{$\epsilon=1e{-}03$}\;,{$\epsilon=1e{-}05$}\;,{$\epsilon=1e{-}07$}\;,{$\epsilon=1e{-}09$}};
    \end{groupplot}
\end{tikzpicture}
  \end{minipage}
\caption{\footnotesize Test 2, non-localized solution. Left: modulus of the full order solution at four different time instants for $\prm=(0.97,0.97)$. Right: $\epsilon$-rank of the full solution $\fsR(t)\in\Rbb^{20000\times 100}$.}\label{fig:nonlocalized_fullsol_rankfullsol}
\end{figure}

\subsubsection{Non-rank-adaptive algorithm}
First, we assess the performances of the reduced and hyper-reduced systems when the dimension of the reduced space is not updated during the simulation. To this end, we fix $\Nr=2\Nrz=8$ which is the exact rank of the initial condition.
The evolution of the relative errors with respect to the full order solution is shown in \Cref{fig:nonlocalized_nonra_n4_m25} (left). Differently from the first test case, numerical errors increase at a constant rate until the final time, which is in accordance with the increase of the numerical rank of the full solution from \Cref{fig:nonlocalized_fullsol_rankfullsol}. In the hyper-reduced system, the initial EIM basis is constructed via POD of the matrix of the nonlinear reduced Jacobian with tolerance $\tau_\nm=10^{-10}$, which yields $\nm_0=25$. The EIM update is then performed at every time step with $\tau_\nms=10^{-4}$. We test different combinations of the choices of sample parameters $\npAs$ and $\npUs$. The computational time required by the different algorithms is reported in \Cref{fig:nonlocalized_times_err_ms} (left). 

In the best case scenario, we observe a reduction of the computational time by a factor $25$ compared to the reduced order model. An important remark is that, in contrast with the localized case, updating the EIM basis using $\npUs<\np$ sample parameters does not yield any computational gain; on the contrary, this choice turns out to be more expensive than performing the update with all $\np$ test parameters. The reason behind this behavior is related to the number $\nms$ of sample EIM indices, which is shown in \Cref{fig:nonlocalized_times_err_ms} (right). The non-local nature of the problem causes the value of $\nms$ to be comparable to $\nd=\Nfh$ for a large portion of the time window $[0,T]$. 
As shown in \Cref{sec:adaptEIM}, the arithmetic complexity of selecting $\npUs$ sample indices and updating the EIM basis depends, among others, on the number of sample indices $\nms$, and this cost becomes dominant when $\nms\approx\nd$. 
Using only $\npUs$ sample parameters to perform the EIM update does not compensate for this extra cost, even when $\npUs\ll\np$. In view of this fact, we will set $\npUs=\np$ in the following. On the other hand, a significant reduction of computational time can be seen when $\npAs<\np$ sample parameters are used to evolve the reduced basis. This reduction amounts to over $30\%$ when $\npUs=\np$. In all cases, there is little difference in the relative errors compared to the benchmark case $\npUs=\npAs=\np$.

\begin{figure}[ht]
\centering
\begin{tikzpicture}
    \begin{groupplot}[
      group style={group size=2 by 1,
                  horizontal sep=1.4cm},
      width=6.5cm, height=5cm
    ]
    \nextgroupplot[ylabel={$\mathcal{E}(t)$},
                  xlabel={$t$},
                  xlabel style = {yshift=.2cm},
                  ylabel style = {yshift=-.2cm},
                  axis line style = thick,
                  grid=both,
                  minor tick num=0,
                  ytick={1e-01,1e-02,1e-03,1e-04,1e-05,1e-06,1e-07},
                  grid style = {gray,opacity=0.2},
                  xmin=0, xmax=3,
                  ymin=1e-07, ymax=1e-01,
                  ymode=log,
                  xlabel style={font=\footnotesize},
                  ylabel style={font=\footnotesize},
                  x tick label style={font=\footnotesize},
                  y tick label style={font=\footnotesize},
                  legend style={font=\footnotesize},
                  legend cell align={left},
                  legend columns = 1,
                  legend style={at={(0.58,0.64)},anchor=north}]
        \addplot+[color=black,dashed,mark=none,line width=1.5pt] table[x=t,y=red] {nonlocalized_nonra_errors.txt};
        \addplot+[color=ForestGreen,mark=none,line width=1.5pt] table[x=t,y=hred_pApU] {nonlocalized_nonra_errors.txt};
        \addplot+[color=Cyan,mark=none,line width=1.5pt] table[x=t,y=hred_pApUs] {nonlocalized_nonra_errors.txt};
        \addplot+[color=YellowOrange,mark=none,line width=1.5pt] table[x=t,y=hred_pAspU] {nonlocalized_nonra_errors.txt};
        \addplot+[color=red,mark=none,line width=1.5pt] table[x=t,y=hred_pAspUs] {nonlocalized_nonra_errors.txt};
        \legend{{ROM},{hROM, $\npAs=\np,\npUs=\np$},{hROM, $\npAs<\np,\npUs=\np$},{hROM, $\npAs=\np,\npUs<\np$},{hROM, $\npAs<\np,\npUs<\np$}};
    \nextgroupplot[ylabel={$\npUs,\npAs$},
                  xlabel={$t$},
                  xlabel style = {yshift=.2cm},
                  ylabel style = {yshift=-.4cm},
                  axis line style = thick,
                  grid=both,
                  minor tick num=0,
                  grid style = {gray,opacity=0.2},
                  xmin=0, xmax=3,
                  ymin=0, ymax=10,
                  xlabel style={font=\footnotesize},
                  ylabel style={font=\footnotesize},
                  x tick label style={font=\footnotesize},
                  y tick label style={font=\footnotesize}]
        \addplot+[color=Cyan,dashed,mark=none,line width=1.5pt] table[x=tpAs_pAspU,y=pAs_pAspU] {nonlocalized_nonra_pAstar_pUstar.txt};
        \addplot+[color=YellowOrange,mark=none,line width=1.5pt] table[x=tpUs_pApUs,y=pUs_pApUs] {nonlocalized_nonra_pAstar_pUstar.txt};
        \addplot+[color=red,mark=none,line width=1.5pt] table[x=tpUs_pAspUs,y=pUs_pAspUs] {nonlocalized_nonra_pAstar_pUstar.txt};
        \addplot+[color=red,dashed,dash phase=3pt,mark=none,line width=1.5pt] table[x=tpAs_pAspUs,y=pAs_pAspUs] {nonlocalized_nonra_pAstar_pUstar.txt};
    \end{groupplot}
\end{tikzpicture}
\caption{\footnotesize Test 2, non-localized solution. Dimension of the dynamical reduced basis space $\Nr=2\Nrz=8$. Left: evolution of the relative errors of the reduced and hyper-reduced systems for $\nm_\tind=\nm_0=25$ and with different choices of the sample parameters. Right: evolution of $\npUs$ (solid lines) and of $\npAs$ (dashed lines).}\label{fig:nonlocalized_nonra_n4_m25}
\end{figure}

\begin{figure}[h]
\begin{minipage}{0.5\linewidth}
{\footnotesize
    \begin{center}
    \begin{tabular}{|c|c|c|}\hline
        Model & Time [s] & $\Ecal(T)$ \\ \hline
        ROM & 375.7 & 4.61e{-}02 \\ \hline
        hROM $\npAs=\np,\npUs=\np$ & 21.1 & 4.73e-02 \\
        hROM $\npAs<\np,\npUs=\np$ & 14.8 & 5.99e-02 \\
        hROM $\npAs=\np,\npUs<\np$ & 29.0 & 6.09e-02 \\
        hROM $\npAs<\np,\npUs<\np$ & 23.2 & 7.43e-02 \\ \hline
    \end{tabular}
    \end{center}
}
\end{minipage}\hfill
\begin{minipage}{0.45\linewidth}
   \centering
        \begin{tikzpicture}
        \begin{groupplot}[
          group style={group size=1 by 1,
                      horizontal sep=1.4cm},
          width=5.5cm, height=4.5cm
        ]
        \nextgroupplot[ylabel={$\nms$},
                      xlabel={$t$},
                      xlabel style = {yshift=.2cm},
                      ylabel style = {yshift=-.2cm},
                      axis line style = thick,
                      grid=both,
                      minor tick num=0,
                      grid style = {gray,opacity=0.2},
                      xmin=0, xmax=3,
                      ymin=0, ymax=10000,
                      xlabel style={font=\footnotesize},
                      ylabel style={font=\footnotesize},
                      x tick label style={font=\footnotesize},
                      y tick label style={font=\footnotesize},
                      legend style={font=\footnotesize},
                      legend cell align={left},
                      legend columns = 1,
                      legend style={at={(0.6,0.65)},anchor=north}]
            \addplot+[color=ForestGreen,mark=none,each nth point={10},line width=0.5pt] table[x=t,y=pApU] {nonlocalized_nonra_ms.txt};
            \addplot+[color=Cyan,mark=none,each nth point={10},line width=0.5pt] table[x=t,y=pAspU] {nonlocalized_nonra_ms.txt};
            \addplot+[color=YellowOrange,mark=none,each nth point={10},line width=0.5pt] table[x=t,y=pApUs] {nonlocalized_nonra_ms.txt};
            \addplot+[color=red,mark=none,each nth point={10},line width=0.5pt] table[x=t,y=pAspUs] {nonlocalized_nonra_ms.txt};
            \legend{{$\npAs=\np,\npUs=\np$},{$\npAs<\np,\npUs=\np$},{$\npAs=\np,\npUs<\np$},{$\npAs<\np,\npUs<\np$}};
        \end{groupplot}
    \end{tikzpicture}
  \end{minipage}
\caption{Test 2, non-localized solution. Non-rank-adaptive case, $\Nr=2\Nrz=8$, $\nm_\tind=\nm_0=25$. Left: computational times and relative errors at the final time. Right: evolution of the number $\nms$ of EIM sample indices selected according to \Cref{alg:DEIM_update} with $\tau_{\nms}=10^{-4}$.}\label{fig:nonlocalized_times_err_ms}
\end{figure}

\subsubsection{Rank-adaptive algorithm}
We consider $\radapt=\cadapt=1.1$ in the update criterion, and we compare the two error indicators described in \Cref{sec:rank-adaptivity}. The dimension of the reduced space at the initial time is $2\Nrz=8$ and a tolerance $\tau_\nm=\{10^{-6},10^{-8}\}$ is used to adapt the dimension of the EIM space. The EIM sample indices are computed based on the tolerance $\tau_{\nms}=10^{-4}$. On the basis of the previous experiments, we set $\npUs=\np$ and $\npAs<\np$ in the hyper-reduced system. \Cref{fig:nonlocalized_ra_n4} shows the time evolution of the relative error in the reduced and hyper-reduced systems (left), the dimension $\Nr$ of the reduced space (center) and the dimension $\nm_\tind$ of the EIM space (right). The results are compared to the error obtained in the reduced system with the same value of $2\Nrz$ and no rank adaptivity.

\begin{figure}[h]
\centering
\begin{tikzpicture}
    \begin{groupplot}[
      group style={group size=3 by 1,
                  horizontal sep=1.4cm},
      width=4.5cm, height=4.5cm,
    ]
    \nextgroupplot[ylabel={$\Ecal(t)$},
                  xlabel={$t$},
                  xlabel style = {yshift=.2cm},
                  ylabel style = {yshift=-.2cm},
                  axis line style = thick,
                  grid=both,
                  minor tick num=0,
                  ymode=log,
                  grid style = {gray,opacity=0.2},
                  xmin=0, xmax=3,
                  ymin=1e-06, ymax=1e-01,
                  ytick={1e-06,1e-05,1e-04,1e-03,1e-02,1e-01},
                  xlabel style={font=\footnotesize},
                  ylabel style={font=\footnotesize},
                  x tick label style={font=\footnotesize},
                  y tick label style={font=\footnotesize},
                  legend style={font=\footnotesize},
                  legend cell align={left},
                  legend columns = 3,
                  legend style={at={(1.95,1.4)},anchor=north}]
        \addplot+[color=black,solid,mark=none,line width=1.25pt] table[x=t,y=red] {nonlocalized_nonra_errors.txt};
        \addplot+[color=NavyBlue,solid,mark=none,line width=1.25pt] table[x=t,y=pred] {nonlocalized_ra_errors.txt};
        \addplot+[color=Cyan,dashdotted,mark=none,line width=1.5pt] table[x=t,y=p6] {nonlocalized_ra_errors.txt};
        \addplot+[color=Cyan,dotted,mark=none,line width=1.5pt] table[x=t,y=p8] {nonlocalized_ra_errors.txt};
        \addplot+[color=Bittersweet,solid,mark=none,line width=1.25pt] table[x=t,y=tred] {nonlocalized_ra_errors.txt};
        \addplot+[color=orange,dashdotted,mark=none,line width=1.5pt] table[x=t,y=t6] {nonlocalized_ra_errors.txt};
        \addplot+[color=orange,dotted,mark=none,line width=1.5pt] table[x=t,y=t8] {nonlocalized_ra_errors.txt};
        \legend{{},{ROM-RA, $\indres$}\;,{hROM-RA, $\indres$, $\tau_\nm=10^{-6}$}\;,{hROM-RA, $\indres$, $\tau_\nm=10^{-8}$},{ROM-RA, $\indtheta$}\;,{hROM-RA, $\indtheta$, $\tau_\nm=10^{-6}$}\;,{hROM-RA, $\indtheta$, $\tau_\nm=10^{-8}$}};
    \nextgroupplot[ylabel={$\Nr$},
                  xlabel={$t$},
                  xlabel style = {yshift=.2cm},
                  ylabel style = {yshift=-.4cm},
                  axis line style = thick,
                  grid=both,
                  minor tick num=0,
                  grid style = {gray,opacity=0.2},
                  xmin=0, xmax=3,
                  ymin=0, ymax=34,
                  xlabel style={font=\footnotesize},
                  ylabel style={font=\footnotesize},
                  x tick label style={font=\footnotesize},
                  y tick label style={font=\footnotesize}]
        \addplot+[color=black,solid,mark=none,line width=1.25pt] table[x=tnared,y=nared] {nonlocalized_ra_2n.txt};
        \addplot+[color=NavyBlue,solid,mark=none,line width=1.25pt] table[x=tpred,y=pred] {nonlocalized_ra_2n.txt};
        \addplot+[color=Cyan,dashdotted,mark=none,line width=1.5pt] table[x=tp6,y=p6] {nonlocalized_ra_2n.txt};
        \addplot+[color=Cyan,dotted,mark=none,line width=1.5pt] table[x=tp8,y=p8] {nonlocalized_ra_2n.txt};
        \addplot+[color=Bittersweet,solid,mark=none,line width=1.25pt] table[x=ttred,y=tred] {nonlocalized_ra_2n.txt};
        \addplot+[color=orange,dashdotted,mark=none,line width=1.5pt] table[x=tt6,y=t6] {nonlocalized_ra_2n.txt};
        \addplot+[color=orange,dotted,mark=none,line width=1.5pt] table[x=tt8,y=t8] {nonlocalized_ra_2n.txt};
    \nextgroupplot[ylabel={$\nm_\tind$},
                  xlabel={$t$},
                  xlabel style = {yshift=.2cm},
                  ylabel style = {yshift=-.25cm},
                  axis line style = thick,
                  grid=both,
                  minor tick num=0,
                  grid style = {gray,opacity=0.2},
                  xmin=0, xmax=3,
                  ymin=0, ymax=700,
                  xlabel style={font=\footnotesize},
                  ylabel style={font=\footnotesize},
                  x tick label style={font=\footnotesize},
                  y tick label style={font=\footnotesize}]
        \addplot+[color=Cyan,dashdotted,mark=none,line width=1.5pt] table[x=tp6,y=p6] {nonlocalized_ra_m.txt};
        \addplot+[color=Cyan,dotted,mark=none,line width=1.5pt] table[x=tp8,y=p8] {nonlocalized_ra_m.txt};
        \addplot+[color=orange,dashdotted,mark=none,line width=1.5pt] table[x=tt6,y=t6] {nonlocalized_ra_m.txt};
        \addplot+[color=orange,dotted,mark=none,line width=1.5pt] table[x=tt8,y=t8] {nonlocalized_ra_m.txt};
    \end{groupplot}
\end{tikzpicture}
\caption{\footnotesize Test 2, non-localized solution. Rank-adaptive case, $2\Nrz=8$, $\tau_{\nms}=10^{-4}$, different error indicators. Left: evolution of the relative errors of the reduced and hyper-reduced systems. The black solid line refers to the non-rank-adaptive reduced order model (ROM-NRA). Center: evolution of the dimension of the reduced space. Right: evolution of the dimension of the EIM~space.}\label{fig:nonlocalized_ra_n4}
\end{figure}

When $\indres$ is used as error indicator and $\tau_{\nm}=10^{-8}$, the error of the rank-adaptive hyper-reduction algorithm is comparable to that of the reduced system. Moreover, the errors resulting from the rank-adaptive algorithms increase at a smaller rate than in the non-adaptive method. One may be interested in exploiting the rank-adaptive strategy to keep the error approximately constant over time. One possibility to achieve this is to set $\cadapt=1$, so that the update frequency is not limited. Nevertheless, one should always keep in mind the interplay between accuracy and efficiency. Numerical tests not reported here show that the reduced space might become unnecessarily large without any control on the update frequency. In addition, other factors, such as the error due to time integration, prevent the errors to remain flat.

When $\indtheta$ is used as error indicator, the updates of the reduced dimension are not frequent enough to produce a significant improvement in the numerical errors. Our indicator $\indres$ allows for a better control: the updates are more frequent, especially at the beginning of the simulation, and the reduced space is larger at the final time. 

Next we present a brief discussion of the computational efficiency of the schemes described above. A plot of $\Ecal(T)$ vs. the computational time is shown in \Cref{fig:nonlocalized_ra_comptimes}. We observe that the hyper-reduced model always outperforms its reduced order counterpart as it yields comparable errors at smaller algorithm runtimes. This computational gain is less substantial in the rank-adaptive case because the dimension of the EIM space $\nm_\tind$ must also be increased. As mentioned before, this can be addressed by adaptive techniques to select the update frequency $\deimf$ and the tolerance $\tau_\nm$. Finally, using $\indres$ rather than $\indtheta$ as an error indicator proves to be a better choice as the errors are smaller with only a slight increase of the computational time.

\begin{figure}[ht]
\centering
\begin{tikzpicture}[every mark/.append style={mark size=2.5pt, line width=.8pt},scale=1]
    \begin{groupplot}[
      group style={group size=1 by 1,
                  horizontal sep=1.4cm},
      width=6.5cm, height=4.5cm
    ]
    \nextgroupplot[ylabel={$\Ecal(T)$},
                  xlabel={Runtime [s]},
                  ylabel style = {yshift=-.2cm},
                  axis line style = thick,
                  grid=both,
                  minor tick num=0,
                  xmode = log,
                  ymode = log,
                  grid style = {gray,opacity=0.2},
                  xmin=10^1, xmax=10^3,
                  ymin=10^(-3), ymax=10^(-1),
                  xlabel style={font=\footnotesize},
                  ylabel style={font=\footnotesize},
                  x tick label style={font=\footnotesize},
                  y tick label style={font=\footnotesize},
                  legend style={font=\footnotesize},
                  legend cell align={left},
                  legend columns = 1,
                  legend style={at={(1.5,1.07)},anchor=north}]
        \addplot+[only marks,color=black,mark=square] table[x=tred_na,y=ered_na] {nonlocalized_ra_comptimes.txt};
        \addplot+[only marks,color=black,mark=triangle] table[x=tred_at,y=ered_at] {nonlocalized_ra_comptimes.txt};
        \addplot+[only marks,color=black,mark=diamond] table[x=tred_ap,y=ered_ap] {nonlocalized_ra_comptimes.txt};
        \addplot+[only marks,color=Cyan,mark=square] table[x=thred_na,y=ehred_na] {nonlocalized_ra_comptimes.txt};
        \addplot+[only marks,color=orange,mark=triangle] table[x=thred_at6,y=ehred_at6] {nonlocalized_ra_comptimes.txt};
        \addplot+[only marks,color=Cyan,mark=diamond] table[x=thred_ap6,y=ehred_ap6] {nonlocalized_ra_comptimes.txt};
        \addplot+[only marks,color=orange,mark=triangle*,mark options={solid,fill=orange,line width=.8pt,mark size=2.5pt}] table[x=thred_at8,y=ehred_at8] {nonlocalized_ra_comptimes.txt};
        \addplot+[only marks,color=Cyan,mark=diamond*,mark options={solid,fill=Cyan,line width=.8pt,mark size=2.5pt}] table[x=thred_ap8,y=ehred_ap8] {nonlocalized_ra_comptimes.txt};
        \legend{{ROM-NRA},{ROM-RA, $\indtheta$},{ROM-RA, $\indres$},{hROM-NRA},{hROM-RA, $\indtheta$, $\tau_\nm=10^{-6}$},{hROM-RA, $\indres$, $\tau_\nm=10^{-6}$},{hROM-RA, $\indtheta$, $\tau_\nm=10^{-8}$},{hROM-RA, $\indres$, $\tau_\nm=10^{-8}$}};
    \end{groupplot}
\end{tikzpicture}
\caption{\footnotesize Test 2, non-localized solution. Relative errors at the final time vs. computational time.}\label{fig:nonlocalized_ra_comptimes}
\end{figure}

We conclude this section by analyzing the time evolution of the Hamiltonian error \eqref{eq:err_hamcons}, shown in \Cref{fig:nonlocalized_hamcons}. Similar conclusions to the case of a localized solution hold here. In particular, we observe that the growth of the error can be controlled by adapting the dimension of the reduced space.
\begin{figure}[ht]
\centering
\begin{tikzpicture}
    \begin{groupplot}[
      group style={group size=1 by 1,
                  horizontal sep=1.5cm},
      width=6.5cm, height=4.5cm,
    ]
    \nextgroupplot[ylabel={$\Ecal_\Hcal(t)$},
                  xlabel={$t$},
                  xlabel style = {yshift=.2cm},
                  ylabel style = {yshift=-.2cm},
                  axis line style = thick,
                  grid=both,
                  minor tick num=0,
                  ytick={1e-08,1e-07,1e-06,1e-05,1e-04,1e-03,1e-02,1e-01},
                  ymode=log,
                  grid style = {gray,opacity=0.2},
                  xmin=0, xmax=3,
                  ymin=1e-08, ymax=1e-01,
                  xlabel style={font=\footnotesize},
                  ylabel style={font=\footnotesize},
                  x tick label style={font=\footnotesize},
                  y tick label style={font=\footnotesize},
                  legend style={font=\footnotesize},
                  legend cell align={left},
                  legend columns = 1,
                  legend style={at={(1.6,0.94)},anchor=north}]
        \addplot+[color=ForestGreen,mark=none,dashed,line width=1.5pt] table[x=t,y=pUpA] {nonlocalized_hamcons.txt};
        \addplot+[color=Cyan,mark=none,dashed,line width=1.5pt] table[x=t,y=pUpAs] {nonlocalized_hamcons.txt};
        \addplot+[color=YellowOrange,mark=none,dashed,line width=1.5pt] table[x=t,y=pUspA] {nonlocalized_hamcons.txt};
        \addplot+[color=Red,mark=none,dashed,line width=1.5pt] table[x=t,y=pUspAs] {nonlocalized_hamcons.txt};
        \addplot+[color=Cyan,mark=none,solid,line width=1.5pt] table[x=t,y=ra] {nonlocalized_hamcons.txt};
        \addplot+[color=black,mark=none,dashed,line width=1.5pt] table[x=t,y=full] {nonlocalized_hamcons.txt};
        \legend{{hROM-NRA, $\npAs=\np,\npUs=\np$},{hROM-NRA, $\npAs<\np,\npUs=\np$},{hROM-NRA, $\npAs=\np,\npUs<\np$},{hROM-NRA, $\npAs<\np,\npUs<\np$},{hROM-RA, $\npAs<\np,\npUs=\np$},{FOM}};
    \end{groupplot}
\end{tikzpicture}
\caption{\footnotesize Test 2, non-localized solution. Time evolution of the error \eqref{eq:err_hamcons} in the conservation of the Hamiltonian.}\label{fig:nonlocalized_hamcons}
\end{figure}


\section{Concluding remarks}
\label{sec:conclusions}

We have developed adaptive hyper-reduced models for Hamiltonian systems that need to be tested for a high number of parameters.
The proposed algorithm combines a symplectic dynamical low-rank approximation to approximate the state with an adaptive hyper-reduction strategy to efficiently deal with nonlinear Hamiltonian vector fields. The resulting hyper-reduced models preserve the geometric structure of the Hamiltonian flow and can be solved at a cost that is linear in the dimension $\Nfh$ of the full order model, linear in the number $\np$ of test parameters and that does not depend on the product $\Nfh\np$. Moreover, the adaptivity of the approximate models allows to deal with transport-dominated problems and conservative dynamical systems characterized by slowly decaying Kolmogorov $n$-widths.
The further adaptivity of both the dimension of the reduced basis space and of the EIM hyper-reduction space allows to achieve the desired accuracy in situations when the (numerical) rank of the solution has high variations over time.
The adaptivity is driven by a novel error indicator which, to the best of our knowledge, outperforms the error indicators currently available in the literature, in terms of numerical errors and computational complexity.
Finally, we have proposed a strategy to select parameter subsamples for the adaptive components of the algorithm to improve computational efficiency while ensuring favorable approximability properties. 

The performances of the proposed rank-adaptive hyper-reduction could be further improved by developing adaptive strategies to select the tolerance $\tau_\nm$, which determines how the dimension of the EIM space changes in time, and the frequency $\deimf$ at which the EIM space is updated.

\bibliographystyle{plain}

\bibliography{references}

\begin{thebibliography}{10}

\bibitem{AH17}
{\sc B.~M. Afkham and J.~S. Hesthaven}, {\em Structure preserving model
  reduction of parametric {H}amiltonian systems}, SIAM J. Sci. Comput., 39
  (2017), pp.~A2616--A2644.

\bibitem{BMNP04}
{\sc M.~Barrault, Y.~Maday, N.~C. Nguyen, and A.~T. Patera}, {\em An `empirical
  interpolation' method: application to efficient reduced-basis discretization
  of partial differential equations}, C. R. Math. Acad. Sci. Paris, 339 (2004),
  pp.~667--672.

\bibitem{BGW15}
{\sc P.~Benner, S.~Gugercin, and K.~Willcox}, {\em A survey of projection-based
  model reduction methods for parametric dynamical systems}, SIAM Review, 57
  (2015), pp.~483--531.

\bibitem{CdS01}
{\sc A.~Cannas~da Silva}, {\em Lectures on symplectic geometry}, vol.~1764 of
  Lecture Notes in Mathematics, Springer-Verlag, Berlin, 2001.

\bibitem{CKL22}
{\sc G.~Ceruti, J.~Kusch, and C.~Lubich}, {\em A rank-adaptive robust
  integrator for dynamical low-rank approximation}, BIT, 62 (2022),
  pp.~1149--1174.

\bibitem{CL23}
{\sc A.~Charous and P.~F.~J. Lermusiaux}, {\em Stable rank-adaptive dynamically
  orthogonal runge–kutta schemes}, SIAM Journal on Scientific Computing, 46
  (2024), pp.~A529--A560.

\bibitem{CS10}
{\sc S.~Chaturantabut and D.~C. Sorensen}, {\em Nonlinear model reduction via
  discrete empirical interpolation}, SIAM J. Sci. Comput., 32 (2010),
  pp.~2737--2764.

\bibitem{deVore17}
{\sc R.~A. DeVore}, {\em The theoretical foundation of reduced basis methods},
  Society for Industrial and Applied Mathematics, 2017, ch.~3.

\bibitem{DG16}
{\sc Z.~Drma{\v{c}} and S.~Gugercin}, {\em A new selection operator for the
  discrete empirical interpolation method---improved a priori error bound and
  extensions}, SIAM J. Sci. Comput., 38 (2016), pp.~A631--A648.

\bibitem{EL17}
{\sc V.~Ehrlacher and D.~Lombardi}, {\em A dynamical adaptive tensor method for
  the {V}lasov–{P}oisson system}, Journal of Computational Physics, 339
  (2017), pp.~285--306.

\bibitem{Fren34}
{\sc J.~Frenkel}, {\em Wave mechanics, advanced general theory}, Clarendon
  Press, Oxford, 1934.

\bibitem{GA22}
{\sc B.~Gao and P.-A. Absil}, {\em A {R}iemannian rank-adaptive method for
  low-rank matrix completion}, Comput. Optim. Appl., 81 (2022), pp.~67--90.

\bibitem{Gro19}
{\sc T.~H. Gronwall}, {\em Note on the derivatives with respect to a parameter
  of the solutions of a system of differential equations}, Ann. of Math. (2),
  20 (1919), pp.~292--296.

\bibitem{HLW06}
{\sc E.~Hairer, C.~Lubich, and G.~Wanner}, {\em Geometric numerical
  integration}, vol.~31 of Springer Series in Computational Mathematics,
  Springer-Verlag, Berlin, second~ed., 2006.

\bibitem{HP20}
{\sc J.~S. Hesthaven and C.~Pagliantini}, {\em Structure-preserving reduced
  basis methods for {P}oisson systems}, Math. Comp., 90 (2021), pp.~1701--1740.

\bibitem{HPR22}
{\sc J.~S. Hesthaven, C.~Pagliantini, and N.~Ripamonti}, {\em Rank-adaptive
  structure-preserving model order reduction of {H}amiltonian systems}, ESAIM
  Math. Model. Numer. Anal., 56 (2022), pp.~617--650.

\bibitem{HPRo22}
{\sc J.~S. Hesthaven, C.~Pagliantini, and G.~Rozza}, {\em Reduced basis methods
  for time-dependent problems}, Acta Numerica, 31 (2022), p.~265–345.

\bibitem{HNS23}
{\sc M.~Hochbruck, M.~Neher, and S.~Schrammer}, {\em Rank-adaptive dynamical
  low-rank integrators for first-order and second-order matrix differential
  equations}, BIT, 63 (2023), pp.~Paper No. 9, 24.

\bibitem{KL07}
{\sc O.~Koch and C.~Lubich}, {\em Dynamical low-rank approximation}, SIAM J.
  Matrix Anal. Appl., 29 (2007), pp.~434--454.

\bibitem{MR99}
{\sc J.~E. Marsden and T.~S. Ratiu}, {\em Introduction to mechanics and
  symmetry}, vol.~17 of Texts in Applied Mathematics, Springer-Verlag, New
  York, second~ed., 1999.

\bibitem{ML64}
{\sc A.~D. McLachlan}, {\em A variational solution of the time-dependent
  {S}chr\"{o}dinger equation}, Molecular Phys., 8 (1964), pp.~39--44.

\bibitem{MN17}
{\sc E.~Musharbash, F.~Nobile, and E.~Vidli\v{c}kov\'{a}}, {\em Symplectic
  dynamical low rank approximation of wave equations with random parameters},
  BIT, 60 (2020), pp.~1153--1201.

\bibitem{NB23}
{\sc M.~H. Naderi and H.~Babaee}, {\em Adaptive sparse interpolation for
  accelerating nonlinear stochastic reduced-order modeling with time-dependent
  bases}, Computer Methods in Applied Mechanics and Engineering, 405 (2023),
  p.~115813.

\bibitem{P19}
{\sc C.~Pagliantini}, {\em Dynamical reduced basis methods for {H}amiltonian
  systems}, Numer. Math., 148 (2021), pp.~409--448.

\bibitem{PV22}
{\sc C.~Pagliantini and F.~Vismara}, {\em Gradient-preserving hyper-reduction
  of nonlinear dynamical systems via discrete empirical interpolation}, SIAM J.
  Sci. Comput.,  (2023).

\bibitem{Peher20}
{\sc B.~Peherstorfer}, {\em Model reduction for transport-dominated problems
  via online adaptive bases and adaptive sampling}, SIAM J. Sci. Comput., 42
  (2020), pp.~A2803--A2836.

\bibitem{Peher15}
{\sc B.~Peherstorfer and K.~Willcox}, {\em Online adaptive model reduction for
  nonlinear systems via low-rank updates}, SIAM J. Sci. Comput., 37 (2015),
  pp.~A2123--A2150.

\bibitem{PM16}
{\sc L.~Peng and K.~Mohseni}, {\em Symplectic model reduction of {H}amiltonian
  systems}, SIAM J. Sci. Comput., 38 (2016), pp.~A1--A27.

\end{thebibliography}

\end{document}